\theoremstyle{plain}
\newtheorem{thm}{Theorem}[section]
\newtheorem{claim}[thm]{Claim}
\newtheorem{corollary}[thm]{Corollary}
\newtheorem{lemma}[thm]{Lemma}
\newtheorem{notation}[thm]{Notation}
\newtheorem{proposition}[thm]{Proposition}
\newtheorem{problem}[thm]{Problem}
\newtheorem{theorem}[thm]{Theorem}
\theoremstyle{definition}
\newtheorem{definition}[thm]{Definition}
\newtheorem{remark}[thm]{Remark}
\theoremstyle{remark}
\newcommand{\bC}{\mathbb{C}}
\newcommand{\bN}{\mathbb{N}}
\newcommand{\bQ}{\mathbb{Q}}
\newcommand{\bR}{\mathbb{R}}
\newcommand{\bZ}{\mathbb{Z}}
\newcommand{\CE}{\mathcal{E}}
\newcommand{\CK}{\mathcal{K}}
\newcommand{\oo}{\operatorname{o}}
\newcommand{\OO}{\operatorname{O}}
\newcommand{\alb}{\operatorname{alb}}
\newcommand{\Aut}{\operatorname{Aut}}
\newcommand{\GL}{\operatorname{GL}}
\newcommand{\id}{\operatorname{id}}
\newcommand{\Imm}{\operatorname{Im}}
\newcommand{\Ker}{\operatorname{Ker}}
\newcommand{\Nef}{\operatorname{Nef}}
\newcommand{\NS}{\operatorname{NS}}
\newcommand{\Alb}{\operatorname{Alb}}
\newcommand{\isom}{\simeq}
\begin{document}

\title[Compact K\"ahler manifolds admitting large solvable groups]
{Compact K\"ahler manifolds admitting large solvable groups of automorphisms}

\author{Tien-Cuong Dinh}
\author{Fei Hu}
\author{De-Qi Zhang}

\address
{\textsc{Department of Mathematics} \endgraf
\textsc{National University of Singapore, 10 Lower Kent Ridge Road, Singapore 119076}}

\email{\href{mailto:matdtc@nus.edu.sg}{matdtc@nus.edu.sg}}
\email{\href{mailto:hf@u.nus.edu}{hf@u.nus.edu}}
\email{\href{mailto:matzdq@nus.edu.sg}{matzdq@nus.edu.sg}}

\begin{abstract}
Let $G $ be a group of automorphisms of a compact K\"ahler manifold $X$ of dimension $n$ and $N(G)$ the subset of null-entropy elements.
Suppose $G$ admits no non-abelian free subgroup.
Improving the known Tits alternative, we obtain that, up to replace $G$ by a finite-index subgroup,
either $G/N(G)$ is a free abelian group of rank $\le n-2$,
or $G/N(G)$ is a free abelian group of rank $n-1$ and $X$ is a complex torus, or $G$ is a free abelian group of rank $n-1$.
If the last case occurs, $X$ is $G$-equivariant birational to the quotient of an abelian variety provided that $X$ is a projective manifold of dimension $n\geq 3$ and is not rationally connected.
We also prove and use a generalization of a theorem by Fujiki and Lieberman on the structure of $\Aut(X)$.
\end{abstract}

\subjclass[2010]{
14J50, 
32M05, 
32H50, 
37B40. 
}

\keywords{automorphism, complex dynamics, iteration, topological entropy}
\maketitle

\section{Introduction}

We work over the field $\bC$ of complex numbers. Let $X$ be a compact K\"ahler manifold of dimension $n$ and $\Aut(X)$ the group of all (holomorphic) automorphisms of $X$.
Fujiki and Lieberman proved that $\Aut(X)$ is a complex Lie group of finite dimension but it may have an infinite number of connected components.
The identity connected component $\Aut_0(X)$ of $\Aut(X)$ is the set of automorphisms obtained by integrating holomorphic vector fields on $X$.
It is a normal subgroup of $\Aut(X)$ and its action on the Hodge cohomology groups $H^{p,q}(X, \bC)$ is trivial. See \cite{Fujiki78, Lieberman78} for details.

Elements in $\Aut_0(X)$ have zero (topological) entropy. Indeed, for any automorphism $g\in\Aut(X)$, the {\it entropy} of $g$, defined in the theory of dynamical systems,
turns out to be equal to the logarithm of the spectral radius of the pull-back operator $g^*$ acting on $\oplus_{0\leq p\leq n} H^{p,p}(X,\bR)$.
This is a consequence of theorems due to Gromov and Yomdin.
Topological entropy is always a non-negative number and also equals the logarithm of the spectral radius of $g^*$ acting on the whole cohomology group $\oplus_{0\leq p,q\leq n} H^{p,q}(X, \bC)$.
This number is strictly positive if and only if the spectral radius of $g^*$ acting on $H^{p,p}(X,\bR)$ is strictly larger than $1$ for all or for some $p$ with $1\leq p\leq n-1$.
See \cite{Dinh12, Gromov03, Yomdin87} for details.

The group $\Aut(X)$ satisfies the following Tits alternative type result which was proved in \cite[Theorem 1.5]{CWZ14} (generalizing \cite[Theorem 1.1]{Zhang-Invent}). See \cite{Dinh12} for a survey.
Recall that a group $H$ is {\it virtually solvable} (resp. {\it free abelian}, {\it unipotent...}), if a finite-index subgroup of $H$ is solvable (resp. free abelian, unipotent...).

\begin{theorem}\label{Z-TitsTh}
Let $X$ be a compact K\"ahler manifold of dimension $n\ge 2$ and $G \le \Aut(X)$ a group of automorphisms. Then one of the following two alternative assertions holds:
\begin{itemize}
\item[(1)] $G$ contains a subgroup isomorphic to the non-abelian free group $\bZ * \bZ$, and hence $G$ contains subgroups isomorphic to non-abelian free groups of all countable ranks.
\item[(2)] $G$ is virtually solvable.
\end{itemize}
In the second case, or more generally, when the representation $G|_{H^2(X,\bC)}$ is virtually solvable, $G$ contains a finite-index subgroup $G_1$ such that the {\bf null-entropy subset}
$$N(G_1) := \{g \in G_1 \, | \, g \,\, \text{\rm is of null entropy}\}$$
is a normal subgroup of $G_1$ and the quotient $G_1/N(G_1)$ is a free abelian group of rank $r \le n-1$.
\end{theorem}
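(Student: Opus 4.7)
The plan is to apply Lie-Kolchin to the virtually solvable representation of $G$ on $H^{1,1}(X,\bC)$, extract from the resulting triangular form a canonical abelian quotient that coincides with $G_1/N(G_1)$, and bound its rank using Hodge-theoretic positivity.

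Since the image of $G$ in $\GL(H^2(X,\bC))$ is virtually solvable, so is its restriction to the $G$-invariant summand $H^{1,1}(X,\bC)$. By Lie-Kolchin one may pass to a finite-index subgroup $G_1\le G$ whose action on $H^{1,1}(X,\bC)$ is simultaneously upper triangular in a suitable basis, giving diagonal characters $\lambda_1,\dots,\lambda_m\colon G_1\to\bC^*$ with $m=h^{1,1}(X)$. The map $\psi\colon G_1\to\bR^m$ defined by $\psi(g):=(\log|\lambda_i(g)|)_{i=1}^{m}$ is a group homomorphism, so its kernel $N:=\ker\psi$ is automatically normal in $G_1$ and $G_1/N$ embeds in $\bR^m$, hence is torsion-free abelian.

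Next I would identify $N$ with $N(G_1)$. By Gromov-Yomdin and the Khovanskii-Teissier log-concavity $d_p(g)^2\ge d_{p-1}(g)\,d_{p+1}(g)$ of the dynamical degrees, together with $d_0(g)=d_n(g)=1$, null entropy of $g$ is equivalent to $d_1(g)=1$. In the triangular basis $d_1(g)=\max_i|\lambda_i(g)|$; since the diagonal entries of $g^{-1}$ are $\lambda_i(g)^{-1}$ and $g^{-1}$ has the same entropy as $g$, one also obtains $(\min_i|\lambda_i(g)|)^{-1}=d_1(g^{-1})=1$. Combining the two, null entropy of $g$ is equivalent to $|\lambda_i(g)|=1$ for every $i$, i.e.\ $g\in\ker\psi$; hence $N(G_1)=N$. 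To upgrade torsion-free abelian to \emph{free} abelian of \emph{finite} rank, note that the kernel of the representation $G_1\to\GL(H^2(X,\bZ))$ acts trivially on $H^{1,1}$ and therefore lies in $N(G_1)$; consequently $G_1/N(G_1)$ is a quotient of a virtually solvable subgroup of the arithmetic group $\GL(H^2(X,\bZ))$, which is virtually polycyclic by Mal'cev and in particular finitely generated.

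The main obstacle is the bound $r\le n-1$, which is not formal. It is the content of the Dinh-Sibony type result that any free abelian group of commuting automorphisms of $X$ on which every non-identity element acts with positive entropy has rank at most $n-1$. After lifting a $\bZ$-basis of $G_1/N(G_1)$ to elements $g_1,\dots,g_r\in G_1$, whose pairwise commutators are strictly upper triangular on $H^{1,1}$ (hence unipotent, of null entropy, and therefore in $N(G_1)$), one attaches to each $g_i$ a nef Perron-Frobenius eigenclass on the closed K\"ahler cone; the mixed Hodge-Riemann bilinear relations combined with the Khovanskii-Teissier log-concavity for mixed intersection numbers then prevent $n$ such classes from being jointly independent on an $n$-dimensional compact K\"ahler manifold, yielding $r\le n-1$ and completing the proof.
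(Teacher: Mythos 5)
This theorem is not proved in the paper: it is quoted from \cite[Theorem 1.5]{CWZ14} and \cite[Theorem 1.1]{Zhang-Invent}, so there is no in-paper proof to compare against; I will assess your argument on its own terms. The middle portion of your proposal is sound and in places cleaner than the standard references: triangularizing $G_1|_{H^{1,1}(X,\bC)}$ via Lie--Kolchin, reading off $d_1(g)=\max_i|\lambda_i(g)|$ and $d_1(g^{-1})=(\min_i|\lambda_i(g)|)^{-1}$, and invoking Gromov--Yomdin plus log-concavity of dynamical degrees does correctly identify $N(G_1)$ with $\Ker\psi$, hence gives normality and a torsion-free abelian quotient; your route to finite generation through Mal'cev polycyclicity of solvable subgroups of $\GL(H^2(X,\bZ))$ is a legitimate substitute for the usual discreteness argument. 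Note, however, that you never address assertion (1) versus (2) at all: the dichotomy requires the classical Tits alternative for the linear image $G|_{H^2(X,\bZ)}$ \emph{together} with control of the kernel of $G\to\GL(H^2(X,\bC))$ (which by Fujiki--Lieberman is virtually contained in the connected Lie group $\Aut_0(X)$), and this step is absent.

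The genuine gap is the bound $r\le n-1$, which is the real content of the statement, and your final paragraph does not prove it. First, the nef Perron--Frobenius eigenclass you attach to each $g_i$ is an eigenvector of $g_i$ only; there is no reason $g_j^*L_i$ should be proportional to $L_i$ for $j\ne i$, so these classes carry no characters of the group and cannot be multiplied against each other to produce relations. Producing \emph{common} nef eigenvectors for all of $G_1$ is exactly the content of the cone theorem of Lie--Kolchin type (Theorem \ref{Lie-Kolchin cone} here, or \cite[Proposition 4.1]{DS04} in the commutative case); it does not follow from the matrix triangularization, since the invariant flag in $H^{1,1}(X,\bC)$ need not meet the nef cone. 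Second, even granting common nef eigenvectors $L_1,\dots,L_k$ with characters $\chi_1,\dots,\chi_k$, the mechanism for the bound is not that ``$n$ such classes cannot be jointly independent'': one must show that the cup products $L_1\cdots L_k$ are \emph{non-zero} (via the Hodge--Riemann/Cauchy--Schwarz inequalities of \cite[Corollary 3.2 and Lemma 4.4]{DS04}), extract the relation $\sum_{i=1}^n\chi_i=0$ from triviality of the action on $H^{n,n}(X,\bR)$, and --- most importantly --- prove the converse implication that an element on which all of $\chi_1,\dots,\chi_{n-1}$ vanish has $d_1=1$, so that the character map $\varphi:G_1\to\bR^{\oplus n-1}$ has kernel exactly $N(G_1)$. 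This last implication is where the quasi-nef sequence and the mixed Hodge--Riemann relations actually do their work (compare the argument in the \hyperref[pfThA]{proof of Theorem \ref{ThA} (1)} in this paper), and it is entirely missing from your sketch. Without it, all you have is an embedding of $G_1/N(G_1)$ into $\bR^{\oplus m}$ with $m=h^{1,1}(X)$, which gives no bound in terms of $n$.
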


When the action of $G$ on $H^2(X,\bC)$ is virtually solvable, the integer $r$ in the theorem is called the {\it dynamical rank} of $G$ and we denote it as $r = r(G)$.
It does not depend on the choice of $G_1$. When $G|_{H^2(X,\bC)}$ is abelian,
Theorem \ref{Z-TitsTh} can be deduced from \cite[Theorem I]{DS04} and the classical Tits alternative for linear algebraic groups.
In \cite[Remarque 4.9]{DS04}, the authors mentioned the interest of studying these $X$ admitting a commutative $G$ of positive entropy and maximal dynamical rank $n-1$.

In this paper, we study virtually solvable but not necessarily commutative groups $G$ of maximal dynamical rank $r(G)=n-1$.
From Theorem \ref{ThA'} below, it turns out that $N(G)$ is then virtually contained in $\Aut_0(X)$, hence $G /(G \cap \Aut_0(X))$ is virtually a free abelian group of rank $n-1$.
In particular, \cite[Question 2.17]{Zhang-Invent} is affirmatively answered by Theorem \ref{ThA'} (or Theorem \ref{ThA}) below.

\begin{theorem}\label{ThA'}
Let $X$ be a compact K\"ahler manifold of dimension $n \ge 2$ and $G \le \Aut(X)$ a group of automorphisms
such that the null-entropy subset $N(G)$ is a subgroup of (and hence normal in) $G$ and $G/N(G) \cong \bZ^{\oplus n-1}$.
Then either $X$ is a complex torus, or $N(G)$ is a finite group and hence $G$ is virtually a free abelian group of rank $n-1$.
\end{theorem}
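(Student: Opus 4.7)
The plan is to show that, up to finite index, $N(G)$ lies inside $\Aut_0(X)$, and then to argue that if $\Aut_0(X)$ has positive dimension then $X$ must be a complex torus. After replacing $G$ by a finite-index subgroup throughout, set $\bar G := G/N(G) \isom \bZ^{\oplus n-1}$ and pick lifts $g_1, \dots, g_{n-1} \in G$ of a basis.

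First I would prove that $N(G)$ acts with finite image on the cohomology ring of $X$. Since $\bar G$ is commutative of maximal dynamical rank $n-1$, the Dinh--Sibony theory of commuting positive-entropy automorphisms (already used to prove Theorem~\ref{Z-TitsTh}) provides common nef classes $\eta_1, \dots, \eta_{n-1} \in H^{1,1}(X,\bR)$ with $g_i^* \eta_j = \lambda_{ij}\, \eta_j$ and the matrix $(\log \lambda_{ij})$ of full rank; in particular the $n-1$ joint characters $g_i \mapsto \lambda_{ij}$ are distinct, so the lines $\bR \eta_j$ are intrinsically determined by $\bar G$. Any $h \in N(G)$ commutes with every $g_i$, hence preserves each joint eigenline $\bR \eta_j$; being of null entropy and acting integrally on $H^2(X,\bZ)/\torsion$, $h$ must act on $\eta_j$ by a root of unity. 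A further finite-index reduction makes $N(G)$ fix each $\eta_j$, and using Hard Lefschetz together with the fact that $\eta_1 \cdots \eta_{n-1}$ is a positive class of top degree, this propagates to the whole cohomology ring, so that $N(G)$ acts with finite image on $\bigoplus_{p,q} H^{p,q}(X,\bC)$.

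Next I would invoke the generalization of Fujiki--Lieberman announced in the abstract: a subgroup of $\Aut(X)$ with finite image in cohomology is virtually contained in $\Aut_0(X)$. After one more finite-index passage, this yields $N(G) \le \Aut_0(X)$. If $\Aut_0(X)$ is finite, then so is $N(G)$ and $G$ is virtually $\bZ^{\oplus n-1}$, giving the second alternative of the theorem. Otherwise $\dim \Aut_0(X) > 0$, and it remains to prove that $X$ is a complex torus.

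This last step is the main obstacle. I would attack it through the $G$-equivariant Albanese morphism $\alb \colon X \to A := \Alb(X)$. By Fujiki's structure theorem, $\Aut_0(X)$ sits in an extension of a compact complex torus by a linear algebraic group, and the existence of non-trivial holomorphic vector fields together with K\"ahlerness forces $\alb$ to be surjective onto a positive-dimensional abelian variety. One then analyses the splitting $n-1 = r(\bar G|_A) + r(\bar G|_{\text{fibre}})$ of dynamical ranks: an induction on $n$, combined with the rigidity of positive-entropy commutative actions on fibrations (i.e. that a rank-$(n-1)$ action on an equivariant fibration $X \to A$ with $\dim A < n$ cannot be sustained unless the fibre itself is a torus with a rank-$(\dim \text{fibre}{-}1)$ action, which in turn obstructs $\Aut_0$-translation invariance along the base), should rule out positive-dimensional fibres of $\alb$ and force the linear part of $\Aut_0(X)$ to be trivial. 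Consequently $\alb$ becomes \'etale, and after a finite base change that does not affect the conclusion, $X$ itself is a complex torus. Closing this last step --- in particular, verifying the rigidity of positive-entropy $\bZ^{\oplus n-1}$-actions on $\Aut_0$-invariant fibrations and controlling the linear factor of $\Aut_0(X)$ --- is where I anticipate the bulk of the work.
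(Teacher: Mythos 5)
Your high-level skeleton (finite image of $N(G)$ on cohomology $\Rightarrow$ generalized Fujiki--Lieberman $\Rightarrow$ torus analysis) matches the paper's, but both the first and last steps have genuine gaps. The most serious is the first. You apply the Dinh--Sibony theory of \emph{commuting} automorphisms to lifts $g_1,\dots,g_{n-1}$ of a basis of $G/N(G)$, but these lifts only commute modulo $N(G)$; the subgroup they generate is not abelian, so the theory does not directly produce common nef eigenvectors $\eta_j$. Worse, you then assert that every $h\in N(G)$ \emph{commutes} with every $g_i$ --- but $N(G)$ is only normal in $G$, not central, so $h$ need not preserve the eigenlines $\bR\eta_j$ even if they existed; the argument that the lines are ``intrinsically determined by $\bar G$'' is circular, since it presupposes that the eigenvector structure depends only on classes modulo $N(G)$, which is essentially what one is trying to prove. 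This non-commutativity is exactly the obstruction the paper spends Sections \ref{section_Fu_Li}--\ref{section_privileged} overcoming: one first passes to the virtually unipotent group $N(G)|_{H^2}$, takes the centre $Z$ of its unipotent part, constructs a $G$-stable \emph{privileged} subspace $F$ on which $Z$ (and then, by induction on the nilpotency class, all of $N(G)$) acts trivially, and only then applies the commutative theory to $G|_F$; a further quasi-nef-sequence argument is needed to show the rank of $G|_F/N(G|_F)$ is still $n-1$, so that the resulting class $L_1+\dots+L_n$ is big. None of this machinery is replaceable by the shortcut you propose.

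The final step also contains an error and leaves the real work undone. The claim that non-trivial holomorphic vector fields together with K\"ahlerness force $\alb_X$ to surject onto a positive-dimensional torus is false ($\bP^n$ has many vector fields and $q=0$); the case $q(X)=0$ with $\dim\Aut_0(X)>0$ must be handled separately, and the paper does so by showing $X$ is then almost homogeneous under a \emph{linear} algebraic group, whence $|\Aut(X):\Aut_0(X)|<\infty$ by \cite{FZ13}, contradicting positive entropy. In the case $q(X)>0$ the paper does not need your proposed induction on $n$ or any ``rigidity of positive-entropy actions on fibrations'': the maximality of the dynamical rank forces every $G$-equivariant meromorphic fibration to be trivial (\cite[Lemmas 2.13--2.14]{Zhang-Invent}), so the Albanese map is already bimeromorphic and the Zariski closure $H$ of $N(G)\cap\Aut_0(X)$ acts on $\Alb(X)$ with a dense orbit, which forces the non-isomorphism locus to be empty. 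You correctly anticipate that this is where the bulk of the work lies, but the route you sketch would not close it.
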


Note that $N(G)$ is normal in $G$ because if $g\in G$ has zero entropy, by Gromov and Yomdin, $hgh^{-1}$ also has zero entropy for every $h\in \Aut(X)$.

\begin{remark}\label{rThA'}
We can deduce from Theorem \ref{Z-TitsTh} that the representation $G|_{H^2(X, \bC)}$ is virtually solvable if and only if, for some finite-index subgroup $G_1$ of $G$,
$N(G_1)$ is a normal subgroup of $G_1$ and $G_1/N(G_1) \cong \bZ^{\oplus r}$ for some $r\le n-1$ (cf. \cite[Theorem 2.3]{CWZ14}). So the condition of Theorem \ref{ThA'} is,
up to replace $G$ by a finite-index subgroup, equivalent to that $G|_{H^2(X, \bC)}$ is virtually solvable and $G$ has maximal dynamical rank $n-1$.
Proposition \ref{PropA} below gives a more precise description of the present situation.
\end{remark}

Recall that by Theorem \ref{Z-TitsTh}, the first hypothesis in the following result is satisfied when $G$ admits no non-abelian free subgroup.
Note also that Theorem \ref{ThA'} can be applied to the following group $G_0$.

\begin{proposition} \label{PropA}
Let $X$ be a compact K\"ahler manifold of dimension $n \ge 2$ and $G \le \Aut(X)$ a group of automorphisms such that $G|_{H^2(X,\bC)}$ is virtually solvable and of maximal dynamical rank $n-1$.
Then there is a normal subgroup $G_0$ of $G$ such that
\begin{itemize}
\item[(1)] $G/G_0$ is isomorphic to a subgroup of the symmetric group $S_n$. In particular, it contains at most $n!$ elements.
\item[(2)] The null-entropy subset $N(G_0)$ is a normal subgroup of $G_0$ and $G_0/N(G_0)\cong\bZ^{\oplus n-1}$.
\end{itemize}
\end{proposition}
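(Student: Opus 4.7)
The plan is to apply Theorem \ref{Z-TitsTh} to extract a normal finite-index subgroup $G_1$ of $G$ carrying the claimed free abelian quotient, and then enlarge $G_1$ to $G_0$ by exploiting the canonical common nef eigenclasses associated to the maximal-rank commutative action of $G_1/N(G_1)$ on $H^{1,1}(X,\bR)$.

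\emph{Step 1.} Apply Theorem \ref{Z-TitsTh} to $G$: since $G|_{H^2(X,\bC)}$ is virtually solvable, there is a finite-index subgroup $G_1 \le G$ with $N(G_1) \lhd G_1$ and $G_1/N(G_1) \cong \bZ^{\oplus n-1}$. Replacing $G_1$ by its (still finite-index) normal core in $G$, we may assume $G_1 \lhd G$; a finite-index subgroup of $\bZ^{\oplus n-1}$ is again isomorphic to $\bZ^{\oplus n-1}$, so this does not affect the conclusion for $G_1$.

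\emph{Step 2.} By the Dinh--Sibony theory of common eigenclasses \cite{DS04} applied to the free abelian group $G_1/N(G_1) \cong \bZ^{\oplus n-1}$ acting with maximal dynamical rank on $X$, there exist $n$ nef classes $[\eta_1], \dots, [\eta_n] \in H^{1,1}(X,\bR)$ with $\eta_1 \cdots \eta_n \ne 0$ in $H^{n,n}(X,\bR)$ which are the unique $n$ common nef eigenrays of $G_1^*$, with characters $\chi_i \colon G_1 \to \bR_{>0}$ satisfying $g^*\eta_i = \chi_i(g)\,\eta_i$, $\prod_i \chi_i \equiv 1$, and $N(G_1) = \bigcap_i \ker \chi_i$. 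For any $h \in G$, the normality $G_1 \lhd G$ implies that each $h^*\eta_i$ is again a common nef eigenclass of $G_1^*$ (with characters conjugated by $h$), so by uniqueness $h$ induces a permutation $\sigma(h) \in S_n$ with $h^*\eta_i \in \bR_{>0}[\eta_{\sigma(h)(i)}]$. The resulting homomorphism $\sigma \colon G \to S_n$ has kernel $G_0 := \ker \sigma$, which is normal in $G$, contains $G_1$, and satisfies $G/G_0 \hookrightarrow S_n$; this is (1).

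\emph{Step 3.} Every $g \in G_0$ fixes each $[\eta_i]$ up to a positive scalar, so the characters extend to $\chi_i \colon G_0 \to \bR_{>0}$, still with $\prod_i \chi_i \equiv 1$. I claim $N(G_0) = \bigcap_i \ker \chi_i|_{G_0}$. For $\subseteq$: if $g$ has null entropy, then both $g^*$ and $(g^{-1})^*$ have spectral radius $1$ on $H^{1,1}(X,\bR)$, so every real eigenvalue with nef eigenvector has modulus $1$; since $\chi_i(g) > 0$, this forces $\chi_i(g) = 1$ for all $i$. For $\supseteq$: if $\chi_i(g) = 1$ for all $i$, pick $m \ge 1$ with $g^m \in G_1$ (possible as $[G_0 : G_1] < \infty$), so $\chi_i(g^m) = 1$ for all $i$, hence $g^m \in N(G_1)$, hence $g^m$ has null entropy, hence so does $g$. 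Therefore $N(G_0)$ is the kernel of the homomorphism $\phi := (\log \chi_1, \dots, \log \chi_n) \colon G_0 \to \bR^n$, so $N(G_0) \lhd G_0$; moreover $G_0/N(G_0) \cong \phi(G_0)$ sits inside the hyperplane $\{x_1 + \cdots + x_n = 0\} \cong \bR^{n-1}$ and contains the lattice $\phi(G_1) \cong \bZ^{\oplus n-1}$ as a finite-index subgroup. Any subgroup of $\bR^{n-1}$ containing a rank $n-1$ lattice as a finite-index subgroup is itself a rank $n-1$ lattice, so $G_0/N(G_0) \cong \bZ^{\oplus n-1}$, proving (2).

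\emph{Main obstacle.} The non-formal input is the uniqueness and the spectral characterisation $N(G_1) = \bigcap_i \ker \chi_i$ of the $n$ common nef eigenclasses from \cite{DS04} under the maximal rank hypothesis; once these are in hand, the permutation action on the $n$ eigenrays and the extension of the characters from $G_1$ to $G_0$ are routine.
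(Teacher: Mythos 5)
Your architecture coincides with the paper's: a normal finite-index subgroup $G_1$ with $G_1/N(G_1)\cong\bZ^{\oplus n-1}$, then $n$ common nef eigenclasses with non-vanishing product, a permutation representation $G\to S_n$ on the eigenrays whose kernel is $G_0$, and finally the character map $(\log\chi_1,\dots,\log\chi_n)$ into the hyperplane $\{\sum x_i=0\}$ to identify $G_0/N(G_0)$ with a torsion-free group squeezed between $\bZ^{\oplus n-1}$ and $\bR^{\oplus n-1}$. Steps 1 and 3 are sound (the paper chooses $G_1$ as the preimage of the identity component of the Zariski closure of $G|_{H^{1,1}(X,\bC)}$, which is automatically normal; your normal core works equally well).

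The gap is concentrated in Step 2. First, \cite{DS04} treats \emph{commutative} groups, and $G_1$ need not act commutatively on $H^{1,1}(X,\bR)$: the abelian quotient $G_1/N(G_1)$ does not act on $H^{1,1}(X,\bR)$ at all unless $N(G_1)$ acts trivially there, and a priori $N(G_1)|_{H^{1,1}}$ is only virtually unipotent, possibly infinite. So you cannot directly produce nef eigenclasses whose characters $\chi_i$ are defined on all of $G_1$. The paper's route is to first prove Theorem \ref{ThA}(1) (the privileged-subspace argument), which shows $N(G_1)|_{H^{1,1}(X,\bR)}$ is \emph{finite}, and then to average over $N(G_1)$ via Lemma \ref{cB-cone}; only after this reduction does the Dinh--Sibony construction apply. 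Your proof never invokes Theorem \ref{ThA}, yet that is the actual engine here. Second, your permutation argument requires that $\{\bR_+\eta_1,\dots,\bR_+\eta_n\}$ be the set of \emph{all} common nef eigenrays of $G_1$; you assert this uniqueness but neither prove it nor does \cite{DS04} state it in this form. The paper avoids it with a targeted Uniqueness Property: for each $i$ pick $g_i\in G_1$ with $\chi_j(g_i)<1$ for all $j\ne i$ (possible because the characters span a rank-$(n-1)$ lattice and sum to zero); then any nef eigenvector of $g_i^*$ with eigenvalue $>1$ is parallel to $L_i$. Since $h^{-1}g_ih\in G_1$ has the same first dynamical degree as $g_i$ and every $L_j$ is one of its eigenvectors, $(h^{-1})^*L_j$ is, for a suitable $j$, a nef eigenvector of $g_i^*$ with eigenvalue $d_1(g_i)>1$, hence parallel to $L_i$ --- giving the permutation without any global uniqueness claim. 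You should either adopt this argument or supply a proof of the full uniqueness you invoke.
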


\begin{remark}\label{rPropA}
Let $X, G$ be as in Proposition \ref{PropA}, and let $H$ be the normalizer of $G$ in $\Aut(X)$. Then either $|H : G| < \infty$, or $X$ is a complex torus.
And in the latter case $|H/(H \cap \Aut_0(X)) : G/(G \cap \Aut_0(X))| < \infty$.
\end{remark}

Theorem \ref{ThA'} (or Theorem \ref{ThA} (2)) enables us to apply \cite[Theorem 1.1]{Zhang-TAMS15} and immediately get the following result.

\begin{corollary}\label{CorB}
Assume the condition for $X$ and $G$ in Theorem \ref{ThA'} (or \ref{ThA}). Assume further that $n \ge 3$, $X$ is a projective manifold and $X$ is not rationally connected.
Then, replacing $G$ by a finite-index subgroup, we have the following:
\begin{itemize}
\item[(1)] There is a birational map $X \dashrightarrow Y$ such that the induced action of $G$ on $Y$ is biregular and $Y = T/F$,
where $T$ is an abelian variety and $F$ is a finite group whose action on $T$ is free outside a finite subset of $T$.
\item[(2)] There is a faithful action of $G$ on $T$ such that the quotient map $T \to T/F = Y$ is $G$-equivariant. Every $G$-periodic irreducible proper subvariety of $Y$ or $T$ is a point.
\end{itemize}
\end{corollary}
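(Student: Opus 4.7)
The plan is to chain Theorem \ref{ThA'} (equivalently Theorem \ref{ThA}(2)) with the structure theorem \cite[Theorem 1.1]{Zhang-TAMS15}. First I would invoke Theorem \ref{ThA'}, which yields the dichotomy that either $X$ is a complex torus or $N(G)$ is finite and $G$ is virtually a free abelian group of rank $n-1$. Since $X$ is projective, in the torus case $X$ is an abelian variety; I would take $Y = T = X$ and $F$ trivial, at which point (1) holds tautologically. The last clause of (2) in this case follows from the fact that after passing to a finite-index subgroup of $G$, the induced action of $G / (G \cap \Aut_0(T)) \cong \bZ^{\oplus n-1}$ on $T$ by linear automorphisms has maximal dynamical rank, which forces any $G$-periodic proper subvariety of $T$ to be a point.

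In the remaining case, I would first replace $G$ by the finite-index subgroup provided by Theorem \ref{ThA'} for which $N(G) = \{1\}$ and $G \cong \bZ^{\oplus n-1}$; then every nonidentity element of $G$ has positive entropy. Together with the hypotheses that $X$ is projective of dimension $n \geq 3$ and not rationally connected, this matches precisely the input required by \cite[Theorem 1.1]{Zhang-TAMS15}. That theorem directly outputs the $G$-equivariant birational model $Y = T/F$ with $F$ acting freely outside a finite subset of $T$, the lifted faithful action of $G$ on $T$ making the quotient $T \to Y$ equivariant, and the assertion that every $G$-periodic irreducible proper subvariety of $Y$ or of $T$ is a point.

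Since the substantive content is contained in the two cited theorems, I do not anticipate a genuine obstacle; the work reduces to verifying that the hypotheses of \cite[Theorem 1.1]{Zhang-TAMS15} are met without distortion by the output of Theorem \ref{ThA'}. The only bookkeeping point is that if the cited theorem silently performs its own passage to a further finite-index subgroup of $G$, that replacement must be absorbed into the single finite-index replacement already declared in the corollary's statement; I expect this to be routine and, accordingly, the author's remark that the result is obtained \emph{immediately} seems fully justified.
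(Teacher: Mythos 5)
Your proposal matches the paper's own (one-line) justification exactly: the paper simply observes that Theorem~\ref{ThA'} reduces to the dichotomy and then cites \cite[Theorem 1.1]{Zhang-TAMS15}, which is precisely your chain of reasoning, with your explicit handling of the abelian-variety case and of the passage to the torsion-free finite-index subgroup being correct (and slightly more careful) bookkeeping. No discrepancy to report.
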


When $X$ is rationally connected,
we can also state some necessary and sufficient hypothesis for $X$ to be $G$-equivariant birational to a quotient of an abelian variety, see \cite{Zhang-TAMS15} for more details.

Generalizing Theorem \ref{ThA} and Corollary \ref{CorB}, we propose the following. See \cite[Problem 1.5]{Dinh12} and the remarks afterwards for a related problem.
See Oguiso and Truong \cite[Theorem 1.4]{OT13} for the first example of rational threefold or Calabi--Yau threefold admitting a primitive automorphism of positive entropy.

\begin{problem}
\begin{itemize}
\item[(1)] Classify compact K\"ahler manifolds of dimension $n \ge 3$ admitting a solvable group $G$ of automorphisms of dynamical rank $r$ for some $2\leq r\leq n-1$,
such that the pair $(X, G)$ is primitive, i.e., there is no non-trivial $G$-equivariant meromorphic fibration even after replacing $G$ by a finite-index subgroup.
\item[(2)] One may strengthen the condition on $G$ to that every non-trivial element of $G$ is of positive entropy. So $G$ is isomorphic to $\bZ^{\oplus r}$.
\item[(3)] One may also weaken the condition on $G$ to that the representation $G |_{H^{1,1}(X, \bC)}$ is an abelian or solvable group.
\end{itemize}
\end{problem}

One may ask similar problems for the case of manifolds defined over fields of positive characteristic but new tools need to be developed.
We refer the reader to a pioneering work in this direction by Esnault and Srinivas in \cite{ES13}.

The present paper is organized as follows. In Section \ref{section_Fu_Li}, we give a generalization of a theorem by Fujiki and Lieberman that we will need in the proof of the main theorem.
Theorem \ref{th_Fu_Li} and Proposition \ref{LF} should be of independent interest.
In Section \ref{section_privileged}, we study a notion of privileged vector space which allows us to construct common eigenvectors with maximal eigenvalues.
The main theorem and its proof, as well as the proofs of Proposition \ref{PropA} and Remark \ref{rPropA} are given in the last section.

\par \vskip 1pc \noindent
{\bf Idea of the proof of the main theorems \ref{ThA'} and \ref{ThA}.} Assume $G/N(G)$ is free abelian of maximal rank $n-1$.
The key step is to show that the null-entropy subset $N(G)$ is virtually contained in the identity connected component $\Aut_0(X)$ of $\Aut(X)$,
i.e., $N(G)$ is a finite extension of $N(G) \cap \Aut_0(X)$.
Indeed, once this assertion is proved, either $\Aut_0(X)$ is trivial and hence $G$ is virtually free abelian of rank $n-1$, or $\Aut_0(X)$ is non-trivial.
In the latter case, since the maximality of the dynamical rank of $G$ implies that every $G$-equivariant fibration is trivial, we can show that

\vskip 1em
-- either $X$ is almost homogeneous under a linear algebraic group and hence $G$ is of null entropy, which contradicts the fact that the dynamical rank of $G$ is $n-1 \ge 1$ (cf. \cite{FZ13}).

\vskip 1em
-- or the albanese map $\alb_X : X \to \Alb(X)$ is bimeromorphic (and indeed is biholomorphic), so $X$ is a complex torus.

\vskip 1em
The desired assertion is equivalent to that the representation of $N(G)$ on $H^2(X, \bC)$ is a finite group, in view of the results of Fujiki and Lieberman \cite{Fujiki78,Lieberman78}.
However, their theorems cannot be directly applied in our setting. We will give in Theorem \ref{th_Fu_Li} or Proposition \ref{LF} some generalization of their results.
Hence, for our purpose, we only need to prove that after replacing $G$ by a finite-index subgroup, the null-entropy subset $N(G)$ fixes a nef and big class $A$ in $H^{1,1}(X,\bR)$. Such a class $A$ will be constructed as follows (see \hyperref[pfThA]{Proof of Theorem \ref{ThA} (1)}).

Note the representation $N(G) |_{H^2(X, \bC)}$ is virtually a unipotent group (say a unipotent group for simplicity), and hence has a non-trivial centre $Z|_{H^2(X,\bC)}$ for some $Z\unlhd G$.
We then introduce a notion of privileged subspaces of $H^2(X,\bR)$, defined over $\bQ$, in which we can control classes in $H^{2,0}(X,\bC)\oplus H^{0,2}(X,\bC)$ by classes in $H^{1,1}(X,\bR)$, see Definition \ref{def_priv}.
The key Proposition \ref{priv} shows the existence of a $G$-invariant privileged subspace $F$ on which the action of $Z$ is trivial.
By considering the upper central series of $N(G) |_{H^2(X, \bC)}$ and doing induction on its length, we can shrink $F$ such that the action of $N(G)$ on such $F$ is also trivial.

It follows from the above discussion that $G|_F$ is abelian. Following the spirit of \cite{DS04}, we can find $s+1$ nonzero common nef classes $L_1,\dots,L_{s+1}\in F$ of $G|_F$, such that $L_1\cdots L_{s+1}\ne 0$ in $H^{s+1,s+1}(X,\bR)$ and the group homomorphism below
$$\psi:G|_F\to (\bR^{\oplus s},+), \quad g^*|_F\mapsto (\chi_1(g),\dots,\chi_{s}(g))$$
satisfies that $\Ker\psi = N(G|_F)$ and $\Imm \psi$ is a discrete subgroup of $\bR^{\oplus s}$ generating $\bR^{\oplus s}$, where each $\chi_i:G\to (\bR,+)$ is the corresponding group character of $L_i$.
In other words, we have
$$G|_F/N(G|_F)\isom \bZ^{\oplus s}.$$

We next claim that the above $s=n-1$. Otherwise, we can extend those $L_i$ to a full quasi-nef sequence as in \cite[\S 2.7]{Zhang-Invent}.
Let $\widetilde N$ denote the inverse image of $N(G|_F)$ under the natural restriction $\pi:G\to G|_F$.
It is easy to see that $\widetilde N/N(G)$ is a free abelian group of rank $n-1-s$.
While by analysing the image of $N(G|_F)$ under the group homomorphism induced by the above quasi-nef sequence, we will see that $\widetilde N/N(G)$ can be embedded into $\bR^{\oplus n-s-2}$ as a discrete subgroup. This is a contradiction.

Now the nef and big class $A := L_1 + \dots + L_n$ satisfies the desired property.

\vskip 1pc \noindent
{\bf Acknowledgement.}
The authors would like to thank the referee for the suggestions which allow us to improve the presentation of the paper. The first and last authors are supported by ARFs of NUS.

\section{Fujiki and Lieberman type theorem} \label{section_Fu_Li}

Let $X$ be a compact K\"ahler manifold of dimension $n$.
Let $\omega$ be a K\"ahler form on $X$. Its class in $H^{1,1}(X,\bR) := H^{1,1}(X, \bC) \cap H^2(X, \bR)$ is denoted by $\{\omega\} $.
Let $\Aut_{\{\omega\} }(X)$ denote the group of all automorphisms $g$ of $X$ preserving $\{\omega\} $, i.e., $g^*\{\omega\} =\{\omega\} $.
Fujiki and Lieberman proved in \cite[Theorem 4.8]{Fujiki78} and \cite[Proposition 2.2]{Lieberman78} that
$\Aut_0(X)$ is a finite-index subgroup of $\Aut_{\{\omega\} }(X)$.

The purpose of this section is to prove a more general version of this theorem that we will use later.

Let $\phi$ be a differential $(p,p)$-form on $X$. We say that $\phi$ is {\it positive} if in any local holomorphic coordinates $z$ we can write $\phi(z)$ as a finite linear combination,
with non-negative functions as coefficients, of $(p,p)$-forms of type
$$(il_1(z)\wedge \overline{l_1(z)})\wedge \cdots \wedge (il_p(z)\wedge \overline{l_p(z)}),$$
where $l_j(z)$ are $\bC$-linear functions in $z$. This notion does not depend on the choice of local holomorphic coordinates $z$.

A $(p,p)$-current $T$ on $X$ is said to be {\it weakly positive} if $T\wedge\phi$ defines a positive measure for any positive $(n-p,n-p)$-form $\phi$.
A $(p,p)$-form is {\it weakly positive} if it is weakly positive in the sense of currents.
A $(p,p)$-current $T$ on $X$ is said to be {\it positive} if $T\wedge\phi$ defines a positive measure for any weakly positive $(n-p,n-p)$-form $\phi$.
It turns out that a form is positive if it is positive in the sense of currents and positivity implies weak positivity.
These two notions coincide for $p=0,1,n-1,n$ and are different otherwise.
If $\alpha$ is a holomorphic $p$-form on $X$, then $i^{p^2}\alpha\wedge\bar\alpha$ is an example of weakly positive $(p,p)$-form.
It vanishes only when $\alpha=0$. This can be easily checked using local coordinates on $X$.

Weakly positive and positive forms and currents are real, that is, they are invariant under the complex conjugation.
If $\omega$ is a K\"ahler form on $X$ as above, then $\omega^p$ is a positive $(p,p)$-form for every $p$.
A positive (resp. weakly positive) $(p,p)$-form or current $T$ is said to be {\it strictly positive} (resp. {\it strictly weakly positive})
if there is an $\epsilon>0$ such that $T-\epsilon\omega^p$ is positive (resp. weakly positive).
We refer to Demailly \cite{Demailly}, or \cite{DS10} for more details.

Recall from Hodge theory that the group $H^{p,q}(X, \bC)$ can be obtained as the quotient of the space of closed differential $(p,q)$-forms by the subspace of exact $(p,q)$-forms.
It is not difficult to see that every closed $(p,q)$-current $T$, defines via the pairing $\langle T,\phi\rangle$ with $\phi$ a closed $(n-p,n-q)$-form,
an element in the dual of $H^{n-p,n-q}(X, \bC)$, because $T$ vanishes on exact $(n-p,n-q)$-forms by Stokes' theorem.
So by Poincar\'e duality, $T$ defines a class in $H^{p,q}(X, \bC)$ that will be denoted by $\{T\}$.

Denote by $\CK_p(X)$ (resp. $\CK^w_p(X)$) the set of classes of strictly positive (resp. strictly weakly positive) closed $(p,p)$-forms.
They are strictly (i.e., salient) convex open cones in $H^{p,p}(X,\bR) := H^{p,p}(X,\bC) \cap H^{2p}(X,\bR)$.
Denote by $\CE_p(X)$ (resp. $\CE_p^w(X)$) the set of classes of positive (resp. weakly positive) closed $(p,p)$-currents.
They are strictly (i.e., salient) convex closed cones in $H^{p,p}(X,\bR)$. Indeed, if $T$ is a (weakly) positive closed $(p,p)$-current,
the quantity $\langle T,\omega^{n-p}\rangle$ is equivalent to the mass-norm of $T$ and it only depends on the cohomology class of $T$.
One often calls this quantity the {\it mass} of $T$. So $T$ vanishes if and only if its class $\{T\}$ vanishes.
Moreover, if $T_n$ are such that $\{T_n\}$ converge to some class $c$, then since $\{T_n\}$ are bounded, the masses of $T_n$ are bounded and hence,
up to extract a subsequence, we can assume $T_n$ converge to some (weakly) positive closed current $T$.
It follows that $c$ is represented by $T$ and hence $\CE_p(X)$ (resp. $\CE_p^w(X)$) are closed.

The {\it K\"ahler cone} of $X$, denoted by $\CK(X)$, consists of all K\"ahler classes.
So it is equal to $\CK_1(X)$ and also to $\CK_1^w(X)$ because positivity and weak positivity coincide for bidegree $(1,1)$.
The closure $\overline{\CK}(X) \subset H^{1,1}(X, \bR)$ of $\CK(X)$ is called the {\it nef cone}.
The cones $\CE_1(X)$ and $\CE_1^w(X)$ are also equal and are denoted simply by $\CE(X)$. This is called the {\it pseudo-effective cone}.
Classes in the interior of $\CE(X)$ are called {\it big} and are the classes of {\it K\"ahler currents}, i.e., strictly positive closed $(1,1)$-currents.
In general, we have
$$\CK_p(X)\subset \CK_p^w(X)\subset \overline \CK^w_p(X)\subset \CE^w_p(X)
\quad \text{and} \quad \CK_p(X)\subset \overline \CK_p(X)\subset \CE_p(X)\subset \CE_p^w(X).$$
The classes in the interior of $\CE_p(X)$ (resp. $\CE_p^w(X)$) are the classes of strictly positive (resp. strictly weakly positive) closed $(p,p)$-currents.
In what follows, we will write $c\leq c'$ or $c'\geq c$ for classes $c,c'\in H^{p,p}(X,\bR)$ such that $c'-c$ belongs to $\CE_p^w(X)$.

The automorphism group $\Aut(X)$ acts naturally on cohomology groups and preserves all the above cones, their closures, interiors and boundaries.
We will fix a norm $\|\cdot\|$ for each cohomology group and a K\"ahler form $\omega$ of $X$.
Our results do not depend on the choice of these norms or $\omega$. The main result in this section is the following theorem.

\begin{theorem} \label{th_Fu_Li}
Let $X$ be a compact K\"ahler manifold of dimension $n$ and $G \le \Aut(X)$ a group of automorphisms.
Assume that for every element $g\in G$ there is a class $c$ of a strictly weakly positive closed $(p,p)$-current
with $1\leq p\leq n-1$,
such that $\|(g^m)^*c\|=\oo(m)$ as $m\to +\infty$. Here, $c$ and $p$ may depend on $g$. Then $G$ is virtually contained in $\Aut_0(X)$, i.e., $|G : G \cap \Aut_0(X)| < \infty$.
\end{theorem}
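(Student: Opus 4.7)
The plan is to reduce to the classical Fujiki--Lieberman theorem by showing that for each $g\in G$ some power of $g^{*}$ acts trivially on $H^{2}(X,\bZ)$. Once this is in hand, the image of $G$ in $\GL(H^{2}(X,\bZ))$ is a torsion subgroup of a $\bZ$-linear group, hence finite by the Minkowski--Jordan theorem, and the classical Fujiki--Lieberman result identifies the kernel of $\Aut(X)\to\GL(H^{2}(X,\bZ))$ with $\Aut_{0}(X)$ up to finite index, giving the conclusion.

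Fix $g\in G$ with witness $c\in\CK^{w}_{p}(X)$. Since $c$ lies in the interior of the salient closed cone $\CE^{w}_{p}(X)$, every pseudo-effective $(p,p)$-class is dominated by a multiple of $c$ in the cone order, and every class in $H^{p,p}(X,\bR)$ is a difference of two such. Because the mass is monotone in the cone order, the pointwise hypothesis $\|(g^{m})^{*}c\|=\oo(m)$ propagates to the operator-norm estimate $\|(g^{m})^{*}|_{H^{p,p}(X,\bR)}\|=\oo(m)$. Sub-linear growth forbids eigenvalues of modulus $>1$ and Jordan blocks at eigenvalues of modulus $1$; and as $g^{*}$ acts integrally on $H^{2p}(X,\bZ)$, the eigenvalues of $g^{*}|_{H^{p,p}(X,\bC)}$ are non-zero algebraic integers of modulus $\le 1$, hence roots of unity by Kronecker's theorem. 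So $g^{*}|_{H^{p,p}}$ is semisimple with root-of-unity spectrum, and $(g^{N})^{*}|_{H^{p,p}}=I$ for some integer $N=N(g)$.

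For any K\"ahler class $L$, set $L_{m}:=(g^{mN})^{*}L$, still K\"ahler. Since $(g^{N})^{*}$ acts trivially on $H^{p,p}$, one has $L_{m}^{p}=L^{p}$, and the algebraic factorisation
\[
0=L_{m}^{p}-L^{p}=(L_{m}-L)\cdot P_{m},\qquad P_{m}:=\sum_{i=0}^{p-1}L_{m}^{i}L^{p-1-i},
\]
yields $(L_{m}-L)\cdot P_{m}\cdot\omega^{n-p-1}=0$ in $H^{n-1,n-1}(X,\bR)$. The $(n-2,n-2)$-class $P_{m}\cdot\omega^{n-p-1}$ is strictly weakly positive, since $P_{m}\ge L^{p-1}\ge\epsilon\,\omega^{p-1}$ for some $\epsilon>0$, and a mixed Hard Lefschetz / Hodge--Riemann statement for strictly weakly positive classes in this middle bi-degree makes multiplication by it an isomorphism $H^{1,1}(X,\bR)\to H^{n-1,n-1}(X,\bR)$. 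Therefore $L_{m}=L$, and as the K\"ahler cone spans $H^{1,1}(X,\bR)$, we obtain $(g^{N})^{*}|_{H^{1,1}(X,\bR)}=I$. In particular $(g^{N})^{*}\omega^{n-2}=\omega^{n-2}$, so $g^{N}$ acts unitarily on $H^{2,0}(X)$ via the Hodge--Riemann Hermitian form with polarization $\omega^{n-2}$; integrality on $H^{2}(X,\bZ)$ and Kronecker then force the eigenvalues on $H^{2,0}$ to be roots of unity and some further power of $g^{*}$ to act trivially on all of $H^{2}(X,\bZ)$, completing the per-element argument.

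The chief obstacle is the Hard Lefschetz step above: the polarization $P_{m}\omega^{n-p-1}$ is a sum of products of K\"ahler classes and not a single K\"ahler power, so classical Hodge theory does not directly apply and one must invoke a mixed Hard Lefschetz / Hodge--Riemann statement for strictly weakly positive $(n-2,n-2)$-classes, in the line of Khovanskii--Teissier, Gromov, Timorin, and Dinh--Nguyen. A subsidiary point to check carefully is the propagation of the operator-norm bound and the spectral analysis on $H^{p,p}$ (including the needed log-concavity of dynamical degrees should one want $\lambda_{q}(g)=1$ for all $q$), but these follow from standard positive-current techniques together with Kronecker's theorem.
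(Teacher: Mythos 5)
Your opening reduction (propagating $\|(g^m)^*c\|=\oo(m)$ to boundedness of $(g^m)^*$ on $H^{p,p}(X,\bR)$ via the cone order) and your closing step (finiteness of the torsion image in $\GL(H^2(X,\bZ))$, then the classical Fujiki--Lieberman theorem) are both sound and essentially match the paper. The middle of the argument, however, has two genuine gaps. First, you apply Kronecker's theorem to the eigenvalues of $g^*|_{H^{p,p}(X,\bC)}$. But $H^{p,p}(X,\bC)$ is not defined over $\bQ$ in general, so the characteristic polynomial of $g^*|_{H^{p,p}}$ need not be integral; the eigenvalues are algebraic integers only because they occur among those of the integral operator $g^*|_{H^{2p}(X,\bZ)}$, and their Galois conjugates may live on other pieces $H^{r,s}$ with $r+s=2p$, where you have no modulus bound. (Compare the familiar phenomenon for a positive-entropy K3 automorphism: $g^*|_{H^{2,0}}$ has all eigenvalues of modulus $1$, yet they are conjugates of a Salem number and not roots of unity.) So you cannot conclude $(g^N)^*|_{H^{p,p}}=I$; at this stage you only have boundedness. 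Second, your Hard Lefschetz step needs cup product with $P_m\cdot\omega^{n-p-1}$, a \emph{sum} of products of $n-2$ K\"ahler classes, to be injective on $H^{1,1}(X,\bR)$. The mixed Hard Lefschetz theorem (Gromov, Timorin, Dinh--Nguyen, Cattani) gives this for a single product $\omega_1\cdots\omega_{n-2}$; it is not known to pass to sums, since the relevant Lefschetz/Hodge--Riemann locus is not convex, and you offer no argument. You flag this yourself as the chief obstacle, but it remains unresolved, and the entire bridge from $H^{p,p}$ to $H^{1,1}$ rests on it.

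The paper sidesteps both problems by working with boundedness rather than exact periodicity. Gromov's inequality $I_{q,m}^2\geq I_{q-1,m}I_{q+1,m}$ (a soft consequence of mixed Hodge--Riemann, Theorem \ref{th_Gromov}) makes $q\mapsto J_q:=\lim\log I_{q,m}/\log m$ concave, and $J_0=J_n=0$ together with $J_p\le 0$ force $J_q=0$ for all $q$, i.e., $(g^m)^*$ is bounded on every $H^{q,q}(X,\bR)$. Boundedness on $H^{1,1}$ is then transferred to $H^{2,0}$ via the inequality $v_{20}\bar v_{20}\le u^2$ (your unitarity argument for the Hodge--Riemann form would also work here, \emph{once} you have invariance or boundedness on $H^{1,1}$), and only then is Kronecker applied --- to $g^*|_{H^2(X,\bZ)}$, whose characteristic polynomial genuinely is integral. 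Replacing your Steps A and B by this log-concavity argument repairs the proof; your treatment of $H^{2,0}$ and the final finiteness step can be kept.
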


The following immediate corollary can be applied when $c$ is a big class in $H^{1,1}(X,\bR)$. If $c$ is the class of a K\"ahler form,
the result is exactly the theorem by Fujiki and Lieberman quoted above.

\begin{corollary} \label{cor_Fu_Li}
Let $c$ be the class of a strictly weakly positive closed $(p,p)$-current, $1\leq p\leq n-1$.
Let $\Aut_c(X)$ denote the group of all automorphisms $g$ such that $g^*c=c$. Then $\Aut_c(X)$ is virtually contained in $\Aut_0(X)$.
\end{corollary}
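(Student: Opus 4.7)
The plan is to apply Theorem \ref{th_Fu_Li} directly, with the given class $c$ used uniformly for every group element. Note that the hypothesis of Theorem \ref{th_Fu_Li} permits the auxiliary class and bidegree to depend on $g$, so a fortiori it is satisfied if a single class works for every $g$ simultaneously.

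Concretely, fix an arbitrary $g \in \Aut_c(X)$. By the defining property of $\Aut_c(X)$ we have $g^*c = c$, and iterating gives $(g^m)^*c = c$ for every $m \geq 1$. Hence the sequence of norms $\|(g^m)^*c\|$ is simply the constant $\|c\|$, which trivially satisfies $\|(g^m)^*c\| = \oo(m)$ as $m \to +\infty$. Since $c$ is, by hypothesis of the corollary, the class of a strictly weakly positive closed $(p,p)$-current with $1 \leq p \leq n-1$, the assumption of Theorem \ref{th_Fu_Li} is met for every element of $\Aut_c(X)$ with a common witness $c$. Theorem \ref{th_Fu_Li} therefore yields $|\Aut_c(X) : \Aut_c(X) \cap \Aut_0(X)| < \infty$, which is the assertion.

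There is essentially no obstacle: the corollary is a tautological specialization of Theorem \ref{th_Fu_Li}, and the only point worth flagging in the write-up is the range $1 \leq p \leq n-1$ (which is inherited from the theorem) and the observation that taking $c$ to be a K\"ahler class in degree $p=1$ recovers the original theorem of Fujiki and Lieberman, while allowing $c$ to be only strictly weakly positive (for instance, a big class in $H^{1,1}(X,\bR)$, or a suitable higher-bidegree class) is precisely the flexibility we will later exploit in the proof of the main theorem.
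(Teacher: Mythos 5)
Your proof is correct and is exactly the argument the paper intends: the corollary is stated as an immediate consequence of Theorem \ref{th_Fu_Li}, obtained by observing that $g^*c=c$ forces $\|(g^m)^*c\|$ to be constant, hence $\oo(m)$. Nothing further is needed.
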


We will see in Lemma \ref{lemma_Fu_Li} below that the condition that $g^*c=c$ for $c$ as in the above corollary is equivalent to the condition that $g^*c$ is parallel to $c$.

We prove now Theorem \ref{th_Fu_Li}.
We will identify $H^{n,n}(X,\bR)$ with $\bR$ by taking the integrals of $(n,n)$-forms on $X$. The cup product of two cohomology classes $c,c'$ is simply denoted by $cc'$ or $c\cdot c'$.
We need the following lemmas.

\begin{lemma} \label{lemma_Fu_Li}
Let $g\in\Aut(X)$. Assume there is a class $c$ of a strictly weakly positive closed $(p,p)$-current for some $1\leq p\leq n-1$, such that $\|(g^m)^*c\|=\oo(m)$ as $m\to +\infty$.
Then, for every $0\leq q\leq n$, the norm of $(g^m)^*$ acting on $H^{q,q}(X,\bR)$ is bounded by a constant independent of $m\in\bN$.
In particular, the conclusion holds if $g^*c$ is parallel to $c$ and in this case we have $g^*c=c$.
\end{lemma}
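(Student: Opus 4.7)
The plan is to combine the Khovanskii--Teissier log-concavity, Kronecker's theorem, and a Minkowski-functional argument tied to the positive class $c$.

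\emph{Step 1: Polynomial growth and reduction to the fixed case.} Using strict weak positivity $c\geq\epsilon\omega^p$ for some $\epsilon>0$, pulling back by $g^m$ and pairing with $\omega^{n-p}$ translates the hypothesis $\|(g^m)^*c\|=\oo(m)$ into $a_p(m):=(g^m)^*\omega^p\cdot\omega^{n-p}=\oo(m)$, while $a_0(m)=a_n(m)=\omega^n$ is constant. The Khovanskii--Teissier inequalities assert that $k\mapsto\log a_k(m)$ is concave on $\{0,1,\dots,n\}$; chord extrapolation between the three known values at $k=0,p,n$ forces each $a_k(m)$ to be polynomial in $m$, and the standard comparison $\|(g^m)^*|_{H^{k,k}(X,\bR)}\|\lesssim a_k(m)$ (for fixed $X,\omega$) transfers this to polynomial growth of the operator norms. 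Hence every dynamical degree of $g$ equals $1$, and by Kronecker's theorem applied to the integer action of $g^*$ every eigenvalue of $g^*$ on $H^*(X,\bC)$ is a root of unity. After replacing $g$ by a suitable power---harmless for the final boundedness claim, since a uniform bound on an arithmetic progression controls the full sequence up to a bounded factor---we may assume $g^*=I+N$ is unipotent on $H^*(X,\bR)$. Expanding $(g^m)^*c=\sum_{j}\binom{m}{j}N^jc$, the sublinear hypothesis kills all terms with $j\geq1$, giving $g^*c=c$. The ``in particular'' assertion then follows at once: $g^*c=\lambda c$ together with $\|(g^m)^*c\|=\oo(m)$ forces $|\lambda|\leq1$; the lower bound $a_p(m)\geq\omega^n$ from log-concavity forces $|\lambda|\geq1$; positivity and reality of $c$ give $\lambda=1$.

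\emph{Step 2: Boundedness on $H^{p,p}$ via a Minkowski functional.} Let $\leq$ denote the partial order on $H^{p,p}(X,\bR)$ induced by the weakly positive cone $\CE_p^w(X)$. Because $c$ lies in the interior of this cone, the set
\begin{equation*}
\Delta_c:=\{\alpha\in H^{p,p}(X,\bR):-c\leq\alpha\leq c\}
\end{equation*}
is convex, balanced and absorbing, and its Minkowski functional $\|\cdot\|_c$ is a norm on $H^{p,p}(X,\bR)$ equivalent to any chosen one. Since $g^*$ and $(g^{-1})^*$ both preserve $\CE_p^w(X)$ and fix $c$, they preserve $\Delta_c$; therefore $g^*$ acts as a $\|\cdot\|_c$-isometry on $H^{p,p}(X,\bR)$, giving $\|(g^m)^*|_{H^{p,p}(X,\bR)}\|=\OO(1)$ uniformly in $m$. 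Combined with the unipotency of $g^*$, this forces the nilpotent part $N|_{H^{p,p}}$ to vanish, equivalently $g^*|_{H^{p,p}}=I$. This Minkowski-functional observation is the crux of the argument and the main obstacle: without it, one has only polynomial bounds from Step~1, and the sublinear hypothesis alone does not upgrade them to uniform bounds directly.

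\emph{Step 3: Closure via log-concavity.} The triviality of the action on $H^{p,p}$ gives $\omega_m^p=(g^m)^*\omega^p=\omega^p$ as cohomology classes, so $a_p(m)=\omega^n$ is constant. With $a_0(m)=a_p(m)=a_n(m)=\omega^n$ and $k\mapsto\log a_k(m)$ concave on $\{0,\dots,n\}$, a concave function on $\{0,\dots,n\}$ taking the same value at three distinct points is necessarily constant, so $a_k(m)=\omega^n$ for every $k$. Via the comparison $\|(g^m)^*|_{H^{q,q}(X,\bR)}\|\lesssim a_q(m)$, the operator norm is uniformly bounded for every $q$, completing the proof.
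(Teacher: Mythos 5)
Your overall skeleton --- controlling the masses $a_q(m)=(g^m)^*\{\omega^q\}\cdot\{\omega^{n-q}\}$ via log-concavity in $q$ with the endpoints $q=0,n$ pinned and $q=p$ controlled by the hypothesis, then transferring mass bounds to operator norms on $H^{q,q}(X,\bR)$ using that $\CE_q^w(X)$ is a salient, closed, spanning cone --- is exactly the paper's argument, and your Steps 2--3 would go through if their input were available. The problem is the input. Your route to ``$g^*c=c$ after passing to a power'' rests on the claim that every eigenvalue of $g^*$ on $H^*(X,\bC)$ is a root of unity, deduced ``by Kronecker's theorem'' from the polynomial bounds of Step 1. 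But Step 1 only bounds the action on the diagonal pieces $H^{q,q}(X,\bR)$, whereas Kronecker must be applied to the integral representation on $H^{2q}(X,\bZ)$: the Galois conjugates of an eigenvalue of $g^*|_{H^{q,q}}$ are eigenvalues of $g^*$ on all of $H^{2q}(X,\bC)=\oplus_j H^{q+j,q-j}$, and nothing in your argument controls the off-diagonal Hodge pieces. The missing bound $\rho(g^*|_{H^{a,b}})\le\sqrt{d_a d_b}$ is true but is a genuine theorem; note that the paper proves the one special case it needs (for $H^{2,0}$) separately in Lemma \ref{lemma_Fu_Li_bis} via the trick $v\bar v\le u^2$. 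Since Steps 2 and 3 both feed on $g^*c=c$, this gap is load-bearing as the proof is written.

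The detour is also unnecessary, and you have misidentified the crux. The sublinear hypothesis does upgrade directly to a uniform bound: decomposing $c$ along the generalized eigenspaces of $g^*|_{H^{p,p}(X,\bC)}$, the Jordan canonical form gives $\|(g^m)^*c\|\asymp |\lambda|^m m^k$ for some eigenvalue $\lambda$ and integer $k\ge 0$, so $\oo(m)$ forces either $|\lambda|<1$ or ($|\lambda|=1$ and $k=0$), hence $\|(g^m)^*c\|=\OO(1)$ --- no integrality, no Kronecker, no passage to a power. This is the paper's opening step. With $\OO(1)$ in hand, the comparison $c_p\le\lambda c$ for $c_p\in\CE_p^w(X)$ already bounds $\|(g^m)^*|_{H^{p,p}}\|$ uniformly; your Minkowski functional is a pleasant repackaging of this cone argument, but it only needs boundedness of the orbit of $c$, not the fixed-point property $g^*c=c$. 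Your Step 3 then closes as in the paper. Finally, your treatment of the ``in particular'' clause is circular: in the parallel case the hypothesis $\|(g^m)^*c\|=\oo(m)$ is not given, so you cannot invoke it to force $\lambda\le 1$; you must first normalize by replacing $g$ with $g^{-1}$ (as the paper does), or apply your lower bound $a_p(m)\ge \{\omega\}^n$ to both $g$ and $g^{-1}$ to squeeze $\lambda=1$.
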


\begin{proof}
Using the Jordan canonical form of $g^*$ acting on $H^{p,p}(X,\bC)$,
we see that the hypothesis $\|(g^m)^*c\|=\oo(m)$ implies that $\|(g^m)^*c\|$ is bounded by a constant independent of $m\in\bN$.
Let $c_p$ be any class in $\CE_p^w(X)$. Since $c$ is in the interior of $\CE_p^w(X)$, there is a constant $\lambda>0$ such that $c_p\leq \lambda c$.
Since $g^*$ preserves $\CE_p^w(X)$ and this cone is salient and closed, we deduce that the sequence $\|(g^m)^*c_p\|$ is also bounded.
We will use this property for the class $\{\omega^p\}$ of $\omega^p$, where $\omega$ is the fixed K\"ahler form of $X$.

For $0\leq q\leq n$, we define
$$I_{q,m}:=\int_X(g^m)^*(\omega^q)\wedge\omega^{n-q} = (g^m)^*\{\omega^q\}\cdot \{\omega^{n-q}\} \quad \text{and} \quad
J_q:=\lim_{m\to +\infty} \frac{\log I_{q,m}}{\log m}\cdot$$
By the Jordan canonical form of $g^*$ acting on $H^{q,q}(X, \bC)$, we see that the limit in the definition of $J_q$ exists in $\bZ_{\ge 0}\cup\{\pm\infty\}$.
The above discussion implies that $J_p\leq 0$. We also have $J_0=J_n=0$ since $g^*$ is the identity on $H^{0,0}(X,\bR) \cong \bR$ and $H^{n,n}(X,\bR) \cong \bR$.

We apply Theorem \ref{th_Gromov} below to $\omega_1:=\omega$, $\omega_2:=(g^m)^*\omega$,
$\omega_i:=(g^m)^*\omega$ for arbitrary $q-1$ indices $3 \le i \le n$ and $\omega_i:=\omega$ for the other $n-q-1$ indices $3 \le i \le n$.
We obtain that $I_{q,m}^2\geq I_{q-1,m} I_{q+1,m}$. It follows that the function $q\mapsto J_q$ is concave, i.e., $2J_q\geq J_{q-1}+J_{q+1}$.
We then deduce that $J_q=0$ for every $0\leq q\leq n$. In particular, using again the Jordan canonical form of $g^*$ acting on $H^{q,q}(X, \bC)$, we see that the sequence $I_{q,m}$ is bounded.

Since the integral $I_{q,m}$ is the mass of the positive closed $(q,q)$-current $(g^m)^*(\omega^q)$, we deduce that the sequence $\|(g^m)^*\{\omega^q\}\|$ is also bounded.
In the same way as it was just done for $\CE_p^w(X)$ at the beginning of the proof, we obtain that the sequence $\|(g^m)^*c_q\|$ is bounded for any $c_q\in \CE_q^w(X)$.
Thus, the property holds for any $c_q\in H^{q,q}(X,\bR)$ because $\CE_q^w(X)$ spans $H^{q,q}(X,\bR)$. So the first assertion follows.

Suppose now that $g^*c = \lambda c$ for some $\lambda \in\bR$. Since $g^*$ preserves the weak positivity, we have $\lambda>0$.
Replacing $g$ by $g^{-1}$ if necessary, we may assume that $\lambda \le 1$.
Then the condition of the first assertion is satisfied, so the norm of $(g^m)^*$ acting on all $H^{q,q}(X, \bR)$ is bounded.
Thus the spectral radius of $g^*$ on all $H^{q,q}(X, \bR)$ are less than or equal to $1$.
The same is true for $(g^{-1})^*$, because the action of $(g^{-m})^*$ on $H^{q,q}(X, \bR)$ is the adjoint action of $(g^m)^*$ on $H^{n-q,n-q}(X,\bR)$ by Poincar\'e duality.
Thus, all eigenvalues of $g^*$ on all $H^{q,q}(X, \bR)$ are of modulus $1$. In particular, if $g^*c = \lambda c$, then $\lambda = 1$.
\end{proof}

Recall the following result of Gromov, that we used above, see \cite[Corollary 2.2]{Dinh12} or \cite{Gromov90}. It is a consequence of the mixed version of the classical Hodge--Riemann theorem.

\begin{theorem}[Gromov] \label{th_Gromov}
Let $\omega_i$ be K\"ahler forms on $X$, $1\leq i\leq n$. Define for $1\leq i,j\leq 2$
$$I_{ij}:=\int_X \omega_i\wedge \omega_j\wedge\omega_3\wedge \cdots\wedge\omega_n=\{\omega_i\}\{\omega_j\}\{\omega_3\}\cdots\{\omega_n\}.$$
Then we have $I_{12}^2\geq I_{11}I_{22}$.
\end{theorem}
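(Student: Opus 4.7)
The plan is to deduce the inequality from the mixed Hodge--Riemann bilinear relations for $(1,1)$-classes. Consider the symmetric bilinear form on $H^{1,1}(X,\bR)$ defined by
$$Q(\alpha,\beta):=\int_X \alpha\wedge\beta\wedge\omega_3\wedge\cdots\wedge\omega_n,$$
so that $Q(\{\omega_i\},\{\omega_j\})=I_{ij}$ for $i,j\in\{1,2\}$. The key input will be the mixed version of the Hodge--Riemann theorem: for any choice of K\"ahler classes $\{\omega_3\},\dots,\{\omega_n\}$, the form $Q$ has signature $(1,h^{1,1}(X)-1)$ on $H^{1,1}(X,\bR)$, i.e., exactly one positive eigenvalue.

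Granting this signature statement, the conclusion follows by a two-dimensional linear algebra argument. If $\{\omega_1\}$ and $\{\omega_2\}$ are proportional in $H^{1,1}(X,\bR)$, the asserted inequality is an equality and there is nothing to prove. Otherwise, let $V\subset H^{1,1}(X,\bR)$ be the 2-plane spanned by $\{\omega_1\},\{\omega_2\}$, so that $Q|_V$ is represented in this basis by the matrix $M=\bigl(\begin{smallmatrix} I_{11}&I_{12}\\ I_{12}&I_{22}\end{smallmatrix}\bigr)$. The signature hypothesis forces $Q|_V$ to have at most one positive eigenvalue, while the diagonal entries $I_{11},I_{22}>0$ (being integrals of pointwise strictly positive $(n,n)$-forms, namely $\omega_i\wedge\omega_i\wedge\omega_3\wedge\cdots\wedge\omega_n$) make the trace of $M$ strictly positive, ruling out the negative-semidefinite possibilities. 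Hence $M$ has exactly one positive and one non-positive eigenvalue, so $\det M\leq 0$, which is precisely $I_{12}^2\geq I_{11}I_{22}$.

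The main obstacle is the mixed Hodge--Riemann statement itself, which is the real content of the theorem. My preferred route is a deformation argument inside the K\"ahler cone. When $\omega_3=\dots=\omega_n$ all coincide with a single K\"ahler class $\omega$, the form $Q$ becomes the classical Lefschetz polarization $(\alpha,\beta)\mapsto\int_X\alpha\wedge\beta\wedge\omega^{n-2}$, for which signature $(1,h^{1,1}(X)-1)$ is the classical Hodge--Riemann bilinear relation in bidegree $(1,1)$. By the hard Lefschetz theorem applied to a mixed product of K\"ahler classes (or, equivalently, by the primitive decomposition relative to $\{\omega_1\}$), the form $Q$ is non-degenerate for every choice of K\"ahler classes $\{\omega_3\},\dots,\{\omega_n\}$. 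Consequently its signature is locally constant on the connected parameter space $\CK(X)^{n-2}$, hence globally equal to the signature at the diagonal. This is exactly Gromov's argument, carried out in \cite{Gromov90}, and the resulting inequality is the one recorded in \cite[Corollary 2.2]{Dinh12} cited above.
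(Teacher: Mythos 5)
The paper does not actually prove this statement: it is quoted as a known result, with a pointer to \cite[Corollary 2.2]{Dinh12} and \cite{Gromov90} and the remark that it follows from the mixed version of the Hodge--Riemann theorem. Your proposal reconstructs exactly that standard derivation, so you are on the intended route, and the linear-algebra reduction is correct: granting that $Q(\alpha,\beta)=\int_X\alpha\wedge\beta\wedge\omega_3\wedge\cdots\wedge\omega_n$ has signature $(1,h^{1,1}(X)-1)$, the restriction of $Q$ to the plane spanned by $\{\omega_1\},\{\omega_2\}$ has positive trace (since $I_{11},I_{22}>0$) and at most one positive eigenvalue, hence non-positive determinant, which is $I_{12}^2\geq I_{11}I_{22}$; the proportional case is trivially an equality.

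The one step you should not pass off as routine is the non-degeneracy of $Q$ for an arbitrary tuple of K\"ahler classes $\{\omega_3\},\dots,\{\omega_n\}$, which is the hinge of the deformation argument. This is \emph{not} "the hard Lefschetz theorem applied to a mixed product of K\"ahler classes": the classical hard Lefschetz theorem concerns cup product with powers $\{\omega\}^{n-2}$ of a single class, and the mixed statement (injectivity of $\cdot\wedge\{\omega_3\}\cdots\{\omega_n\}:H^{1,1}\to H^{n-1,n-1}$) is a genuinely stronger theorem that requires its own proof. Likewise, the parenthetical "equivalently, by the primitive decomposition relative to $\{\omega_1\}$" does not work: primitive decomposition with respect to one class does not diagonalize, or control the kernel of, the mixed product map. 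This non-degeneracy is precisely the content of Gromov's paper (and of the later general mixed Hodge--Riemann theorems of Timorin and Dinh--Nguyen). Since you do ultimately attribute the whole argument to \cite{Gromov90}, your write-up is acceptable as a reduction to a cited theorem --- which is all the paper itself does --- but as a self-contained proof it has a gap at exactly this point, and the two purported justifications offered for it should be deleted rather than left to suggest the step is formal.
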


\begin{lemma}\label{lemma_Fu_Li_bis}
Let $g \in \Aut(X)$.
Assume that the norm of $(g^m)^*$ acting on $H^{1,1}(X,\bR)$ is bounded by a constant independent of $m\in\bN$.
Then the action $g^*|_{H^2(X,\bR)}$ has finite order.
\end{lemma}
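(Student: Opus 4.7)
The plan is to promote the given boundedness of $(g^m)^*$ on $H^{1,1}(X,\bR)$ to boundedness on all of $H^2(X,\bC)$, and then to conclude finite order from Kronecker's theorem via the integrality of $g^*$ on the lattice $H^2(X,\bZ)/\mathrm{torsion}$.

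We may assume $n\ge 2$ (for $n=1$ the statement is trivial since $H^{2,0}=H^{0,2}=0$). The first step is to apply Lemma \ref{lemma_Fu_Li} with $c=\{\omega\}$: the hypothesis forces $\|(g^m)^*\{\omega\}\|$ to be bounded, hence $\oo(m)$, so Lemma \ref{lemma_Fu_Li} gives that $(g^m)^*$ is uniformly bounded on every $H^{q,q}(X,\bR)$, $0\le q\le n$, and in particular on $H^{2,2}(X,\bR)$.

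The main step, and the one I expect to require the most care, is to transfer this boundedness from $H^{2,2}$ to $H^{2,0}$. For any holomorphic $2$-form $\alpha\in H^0(X,\Omega_X^2)\isom H^{2,0}(X,\bC)$, the closed $(2,2)$-form $\alpha\wedge\bar\alpha$ is weakly positive, so the class
$$(g^m)^*\{\alpha\wedge\bar\alpha\}=\{(g^m)^*\alpha\wedge\overline{(g^m)^*\alpha}\}$$
stays bounded in $H^{2,2}(X,\bR)$. Pairing against $\{\omega^{n-2}\}$ then shows that
$$\int_X(g^m)^*\alpha\wedge\overline{(g^m)^*\alpha}\wedge\omega^{n-2}$$
is bounded in $m$. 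As every $(2,0)$-form is automatically primitive on a K\"ahler manifold, this integral is a fixed positive multiple of the $L^2(\omega)$-norm squared of $(g^m)^*\alpha$; and since $(g^m)^*\alpha$ is holomorphic, it is the unique harmonic representative of its cohomology class, so the $L^2$-norm agrees with a norm on the finite-dimensional space $H^{2,0}(X,\bC)$. Therefore $(g^m)^*|_{H^{2,0}(X,\bC)}$ is bounded, and by complex conjugation also $(g^m)^*|_{H^{0,2}(X,\bC)}$.

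Combining these with the hypothesis, $(g^m)^*$ is bounded on $H^2(X,\bC)=H^{2,0}\oplus H^{1,1}\oplus H^{0,2}$, so $g^*|_{H^2(X,\bC)}$ is semisimple with all eigenvalues of modulus one. Because $g^*$ preserves the lattice $H^2(X,\bZ)/\mathrm{torsion}$, these eigenvalues are algebraic integers whose Galois conjugates also have modulus one, and Kronecker's theorem forces them to be roots of unity. Semisimplicity then yields $(g^k)^*=\id$ on $H^2(X,\bC)$ for some $k\ge 1$, so $g^*|_{H^2(X,\bR)}$ has finite order.
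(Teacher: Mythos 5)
Your argument is correct and follows the same overall strategy as the paper's proof: reduce to bounding $(g^m)^*$ on $H^{2,0}(X,\bC)$, exploit the weak positivity of $\alpha\wedge\bar\alpha$ for a holomorphic $2$-form $\alpha$ together with the given bound on $H^{1,1}(X,\bR)$, and then conclude via Kronecker's theorem and the Jordan form. The one step you handle differently is the passage from the boundedness of $(g^m)^*\{\alpha\wedge\bar\alpha\}$ in $H^{2,2}(X,\bR)$ to the boundedness of $(g^m)^*\{\alpha\}$: the paper dominates $v\bar v$ by $u^2$ for a K\"ahler class $u$ and uses that the salient closed cone $\CE_2^w(X)$ turns the inequality $0\le (g^m)^*(v\bar v)\le ((g^m)^*u)^2$ into a norm bound, combined with the fact that $v\mapsto v\bar v$ vanishes only at $v=0$; you instead invoke primitivity of $(2,0)$-forms and the pointwise Hodge--Riemann identity to recognize $\int_X (g^m)^*\alpha\wedge\overline{(g^m)^*\alpha}\wedge\omega^{n-2}$ as a positive multiple of the $L^2$-norm squared of the harmonic representative. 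Both are valid; your route is slightly more analytic and gives the bound in one stroke, while the paper's cone-theoretic formulation is the one reused elsewhere (e.g.\ in Definition \ref{def_priv} (c) and Proposition \ref{priv}). One small point of care at the end: boundedness of the positive iterates only gives eigenvalues of modulus $\le 1$; you should either note that $\det(g^*|_{H^2})=\pm1$ (so all moduli are exactly $1$), or apply Kronecker in the form that an algebraic integer, all of whose conjugates have modulus $\le 1$ and which is nonzero, is a root of unity.
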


\begin{proof}
We first show that the action of $(g^m)^*$ on $H^2(X,\bR)$ is bounded independently of $m\in\bN$. Recall that $(g^m)^*$ preserves the Hodge decomposition
$$H^2(X,\bC)=H^{2,0}(X, \bC)\oplus H^{1,1}(X, \bC)\oplus H^{0,2}(X, \bC).$$
So it is enough to prove that the action of $(g^m)^*$ on $H^{2,0}(X, \bC)$ is bounded independently of $m\in\bN$.
The similar property for $H^{0,2}(X, \bC)$ is then obtained by complex conjugation.

Observe that if $v$ is a class in $H^{2,0}(X, \bC)$, it can be represented by a unique closed holomorphic 2-form $\alpha$.
The form $\alpha\wedge\bar\alpha$ is weakly positive and closed, which vanishes if and only if $\alpha=0$.
We deduce that $v \bar v=0$ if and only if $v=0$. Hence, $\|v\|$ is bounded if and only if $\|v \bar v\|$ is bounded.

For any $(2,0)$-class $v$ in $H^{2,0}(X, \bC)$, there is a K\"ahler class $u$ satisfying $v\bar v\leq u^2$.
Indeed, since the class $\{\omega^2\}$ is in the interior of the cone $\CE_2^w(X)$, it is enough to choose $u$ as a large multiple of the fixed K\"ahler class $\{\omega\}$.
We deduce that
$$(g^m)^*v\cdot (g^m)^* \bar v=(g^m)^*(v \bar v)\leq (g^m)^*(u^2)=(g^m)^*(u)^2.$$
Therefore, by hypothesis, $\|(g^m)^*v\cdot (g^m)^*\bar v \|$ is bounded independently of $m\in\bN$.
Thus $\|(g^m)^*v\|$ satisfies the same property.

We conclude that the action of $(g^m)^*$ on $H^2(X,\bR)$ is bounded independently of $m\in\bN$.
In particular, all eigenvalues of $g^* |_{H^2(X,\bR)}$ are of modulus $\le 1$, hence they are of modulus $1$, and indeed are roots of unity by Kronecker's theorem,
since $g^*$ preserves (and is an isomorphism of) $H^2(X,\bZ)$ and $H^2(X,\bR)=H^2(X,\bZ)\otimes_\bZ\bR$.
This and the above boundedness result imply that the action $g^* |_{H^2(X,\bR)}$ is periodic, by looking at the Jordan canonical form of the action.
\end{proof}

\begin{proof}[{\bf End of the proof of Theorem \ref{th_Fu_Li}.}]
By Lemmas \ref{lemma_Fu_Li} and \ref{lemma_Fu_Li_bis}, for any $g \in G$, the automorphism $g^* |_{H^2(X,\bR)}$ is periodic.
The characteristic polynomials of $g^* |_{H^2(X,\bR)}$ are defined over $\bZ$ and we have seen that their zeros are roots of unity for all $g \in G$.
Since these polynomials are of degree $\dim H^2(X,\bR)$, the orders of these roots of unity are bounded by the same constant, i.e.,
the group $G |_{H^2(X,\bR)}$ has bounded exponent. Thus, it is a finite group by the classical Burnside's theorem, see \cite{Burnside}.
Set $v := \sum h^*\{\omega\}$, where $h$ runs through the finite group $G |_{H^{1,1}(X,\bR)}$ and $\omega$ is the fixed K\"ahler form of $X$.
Then $G$ is contained in
$$\Aut_{v}(X) := \{g \in \Aut(X) \, | \, g^*v = v\}.$$
Since $v$ is a K\"ahler class, the theorem by Fujiki and Lieberman quoted above says that $|\Aut_{v}(X) : \Aut_0(X)| < \infty$.
Now
$$G/(G \cap \Aut_0(X)) \cong (G \cdot \Aut_0(X))/\Aut_0(X) \le \Aut_{v}(X)/\Aut_0(X)$$
and the latter is a finite group. The theorem follows.
\end{proof}

Since $\Aut_0(X)$ acts trivially on $H^{p,q}(X, \bC)$, the above discussion immediately implies the following proposition.

\begin{proposition}\label{LF}
Let $X$ be a compact K\"ahler manifold of dimension $n$ and $G \le \Aut(X)$ a group of automorphisms. Then $G$ is virtually contained in $\Aut_0(X)$ if and only if
the representation of $G$ on ${H^{p,p}(X,\bR)}$ for some $1\leq p\leq n-1$ (or on $\oplus_{0\leq p,q\leq n} H^{p,q}(X, \bC)$) is a finite group.
\end{proposition}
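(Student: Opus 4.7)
The plan is to prove both implications of the equivalence separately, with Theorem \ref{th_Fu_Li} (together with the classical Fujiki--Lieberman theorem already recalled) doing the heavy lifting.

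For the forward direction, suppose $G$ is virtually contained in $\Aut_0(X)$, so there exists a finite-index subgroup $G' \le G$ with $G' \subset \Aut_0(X)$. By the Fujiki--Lieberman fact recalled in the introduction, every element of $\Aut_0(X)$ acts trivially on each Hodge component $H^{p,q}(X,\bC)$, so $G'$ lies in the kernel of the representation of $G$ on $\oplus_{0\le p,q\le n} H^{p,q}(X,\bC)$. Hence the image of this representation, being a quotient of the finite group $G/G'$, is itself finite; in particular its restriction to $H^{p,p}(X,\bR)$ is a finite group for every $p$.

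For the converse, it suffices to treat the (formally weaker) hypothesis that $G|_{H^{p,p}(X,\bR)}$ is finite for some $1 \le p \le n-1$, since finiteness of the representation on the full cohomology manifestly entails finiteness on each $H^{p,p}(X,\bR)$. Let $K$ denote the kernel of the representation $G \to G|_{H^{p,p}(X,\bR)}$, which is a finite-index normal subgroup of $G$. Fix the K\"ahler form $\omega$ on $X$ chosen before the theorem and consider the class $c := \{\omega^p\} \in H^{p,p}(X,\bR)$; it is represented by the strictly positive (hence strictly weakly positive) closed $(p,p)$-form $\omega^p$, and by construction every $g \in K$ satisfies $g^* c = c$. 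Consequently $\|(g^m)^* c\| = \|c\|$ is bounded and, a fortiori, $\oo(m)$ as $m \to +\infty$. Theorem \ref{th_Fu_Li} therefore applies to $K$ with this single class $c$ valid for all elements, and yields $|K : K \cap \Aut_0(X)| < \infty$. Since $K$ has finite index in $G$, the same inequality holds for $G$, so $G$ is virtually contained in $\Aut_0(X)$.

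The argument is essentially immediate once Theorem \ref{th_Fu_Li} is in hand, so no real obstacle is anticipated. The only point that deserves attention is that Theorem \ref{th_Fu_Li} admits a class $c$ that may depend on $g$; in our setting this dependence is avoided by passing to the finite-index subgroup $K$ and taking the common choice $c = \{\omega^p\}$, which is fixed pointwise by every element of $K$, so the hypothesis of Theorem \ref{th_Fu_Li} is verified uniformly on $K$.
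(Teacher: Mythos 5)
Your proof is correct and follows essentially the same route the paper intends: the paper dispatches Proposition \ref{LF} in one sentence by noting that $\Aut_0(X)$ acts trivially on $H^{p,q}(X,\bC)$ (giving the forward direction) and that the preceding discussion, i.e., Theorem \ref{th_Fu_Li} and Corollary \ref{cor_Fu_Li} applied to the finite-index kernel fixing a class such as $\{\omega^p\}$, gives the converse. Your write-up simply makes these two steps explicit, including the correct observation that passing to the kernel removes the $g$-dependence of the class $c$ in Theorem \ref{th_Fu_Li}.
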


\section{Invariant privileged vector spaces} \label{section_privileged}

Let $X$ be a compact K\"ahler manifold of dimension $n$. By Hodge theory, every class $v$ in $H^2(X,\bR)$ admits a unique decomposition
$$v=v_{20}+v_{11}+v_{02},$$
where
$$v_{11}\in H^{1,1}(X,\bR), \,\, v_{20}\in H^{2,0}(X, \bC), \,\, v_{02}\in H^{0,2}(X, \bC)$$
with $v_{02}=\bar v_{20}$. For any vector subspace $V \subseteq H^2(X,\bR)$ define
$$V^0:=V\cap (H^{2,0}(X, \bC) \oplus H^{0,2}(X, \bC)), \,\, V^{11}:=V\cap H^{1,1}(X,\bR).$$
In general, we have $V^0\oplus V^{11}\subseteq V$, but the inclusion may be strict.

\begin{definition} \label{def_priv}
We say that $V$ is {\it privileged} if it satisfies the following properties:
\begin{itemize}
\item[(a)] \label{aaa} $V$ is defined over $\bQ$, $V\not=0$ and $V=V^0\oplus V^{11}$.
\item[(b)] \label{bbb} The intersection of $V^{11}$ with the nef cone $\overline{\CK}(X)$ has non-empty interior in $V^{11}$,
or equivalently this intersection spans $V^{11}$ (because $\overline{\CK}(X)$ is convex).
\item[(c)] \label{ccc} For every $v\in V^0$ there is a nef class $u$ in $V^{11}$ such that $v_{20}\bar v_{20}\leq u^2$.
\end{itemize}
\end{definition}

The main purpose of this section is to construct some privileged subspace of $H^2(X,\bR)$ as stated in the introduction. More precisely, we have the following result.

\begin{proposition}\label{priv}
Let $G$ and $Z$ be subgroups of $\Aut(X)$. Assume that $Z$ is normalized by $G$ and that $Z|_{H^2(X,\bC)}$ (or equivalently $Z|_{H^2(X,\bR)}$) is a unipotent commutative matrix group.
Then, there is a privileged subspace $F\subseteq H^2(X,\bR)$ which is $G$-stable such that the action of $Z$ on $F$ is the identity.
\end{proposition}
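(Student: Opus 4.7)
The plan is to take $F := (H^2(X,\bR))^Z$, the subspace fixed pointwise by $Z$, as the natural candidate, and verify (shrinking if necessary) the three conditions of a privileged subspace. The structural part of (a) is essentially free: $F$ is $G$-stable because $G$ normalizes $Z$ (for $v \in F$, $g \in G$, $z \in Z$ we have $z^*(g^*v) = g^*((gzg^{-1})^*v) = g^*v$); $F$ is defined over $\bQ$ because $Z$ preserves $H^2(X,\bZ)$ and acts there by integer unipotent matrices, so that $F = (H^2(X,\bQ))^Z \otimes_{\bQ}\bR$; $F$ is non-zero since any commutative unipotent group of matrices has non-trivial joint fixed vectors; and $F = F^0 \oplus F^{11}$ because every $z \in Z$, being an automorphism, preserves the Hodge decomposition of $H^2(X,\bC)$, so the projections onto the two Hodge pieces commute with the $Z$-action.

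For condition (b), I would produce $Z$-fixed nef classes in $F^{11}$ by an asymptotic construction built from the unipotence of $Z$. For any K\"ahler class $\{\omega\}$ and any $z \in Z$, setting $N_z := z^*-\id$, the polynomial expansion $(z^m)^*\{\omega\}=\sum_{k \ge 0}\binom{m}{k}N_z^k\{\omega\}$ shows that $\lim_{m\to\infty}(z^m)^*\{\omega\}/m^{k(z,\omega)}$ (with $k(z,\omega)$ the top index where $N_z^{k}\{\omega\}\ne 0$) is a non-zero class in $\Ker N_z$ which is nef as a limit of nef classes. Because $Z$ is commutative, iterating this through a finite generating set of $Z$, one variable at a time, yields a non-zero $Z$-fixed nef class $\{\omega\}^{\infty} \in F^{11}$. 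As $\{\omega\}$ ranges over the open K\"ahler cone, the resulting asymptotic classes should span a subspace of $F^{11}$ having non-empty intersection-interior with $\overline{\CK}(X)$; if this span is strictly smaller than $F^{11}$, one replaces $F$ by the $G$-stable $\bQ$-subspace generated by these classes together with the Hodge-complementary component.

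For condition (c), the argument appearing in the proof of Lemma \ref{lemma_Fu_Li_bis} shows that for any $v \in F^0$ and any K\"ahler class $\{\omega\}$, the weakly positive $(2,2)$-class $v_{20}\bar v_{20}$ satisfies $v_{20}\bar v_{20} \le C\{\omega\}^2$ for some $C > 0$. Applying the previous construction and scaling, the class $u := C'\{\omega\}^{\infty} \in F^{11}$ should then dominate $v_{20}\bar v_{20}$ via $u^2 \ge v_{20}\bar v_{20}$, for an appropriate $C' > 0$; this would yield (c).

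The main obstacle will be guaranteeing that (b) and (c) hold simultaneously while preserving the rational and $G$-stable structure. The asymptotic classes $\{\omega\}^{\infty}$ typically lie on the boundary of $\overline{\CK}(X)$ in $H^{1,1}(X,\bR)$, so one must argue that they are nevertheless in the \emph{relative} interior of $\overline{\CK}(X) \cap F^{11}$ inside $F^{11}$, and that they are ``large enough'' to dominate every element of $F^0$. When the naive choice $F = (H^2(X,\bR))^Z$ fails one of (b), (c), the remedy is to pass to a carefully chosen $G$-stable $\bQ$-subspace generated by asymptotic classes together with the components of $F^0$ they control, using the joint Jordan structure of $Z$ on $H^{1,1}$ and on $H^{2,0}\oplus H^{0,2}$ together with the Hodge pairing between them.
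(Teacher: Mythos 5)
There is a genuine gap, and it sits exactly where you flag the ``main obstacle'': your verification of condition (c) does not work for the candidate $F=(H^2(X,\bR))^Z$, and the proposed repair is only a restatement of the difficulty. If $v\in F^0$ is $Z$-fixed and $v_{20}\bar v_{20}\le C\{\omega\}^2$ for a K\"ahler class $\{\omega\}$, then applying $(z^m)^*$ gives $v_{20}\bar v_{20}\le C\bigl((z^m)^*\{\omega\}\bigr)^2$; but to produce your $Z$-fixed nef class $\{\omega\}^{\infty}$ you must divide by $m^{k}$ with $k\ge 1$ whenever $z$ does not already fix $\{\omega\}$, so the left-hand side is divided by $m^{2k}$ and vanishes in the limit. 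All you retain is $(\{\omega\}^{\infty})^2\ge 0$. This is not a removable defect of the argument: $\{\omega\}^{\infty}$ lies on the boundary of $\overline{\CK}(X)$ and may satisfy $(\{\omega\}^{\infty})^2=0$ while $v_{20}\bar v_{20}\ne 0$ (already on a K3 surface with a parabolic automorphism, every $z$-fixed nef class is isotropic while $H^{2,0}$ is fixed and $v_{20}\bar v_{20}>0$), so no rescaling $u=C'\{\omega\}^{\infty}$ can dominate $v_{20}\bar v_{20}$. The full fixed subspace is simply too large for condition (c) to hold.

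The paper's proof avoids this by never taking the fixed subspace: it sets $\pi:=\lim m^{-l}(z^m)^*$ and replaces $V_0$ by $V_1:=\Imm\pi$, a generally proper subspace of $\Ker(z^*-\id)$. Every $v\in V_1^0$ is then itself a limit $v=\lim m^{-l}(z^m)^*(v^\star)$, and the dominating class is taken to be $u=\lim m^{-l}(z^m)^*(u^\star)$ with the \emph{same} scaling on both sides, so that $u^2-v_{20}\bar v_{20}=\lim m^{-2l}(z^m)^*(u^{\star 2}-v^\star_{20}\bar v^\star_{20})$ lies in the closed cone $\CE^w_2(X)$. Commutativity of $Z$ gives $w\pi=\pi w$, hence $Z$-stability of $\Imm\pi$, and one iterates until $Z$ acts trivially. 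Two further points your sketch leaves open would also need attention: property (b) is obtained in the paper by the open mapping theorem applied to the single surjective linear map $\pi|_{V^{11}_0}$, whereas your classes $\{\omega\}^{\infty}$ carry $\omega$-dependent exponents $k(z,\omega)$ and are not obviously the image of an open set under one linear map; and $G$-stability after shrinking is recovered by summing \emph{all} privileged $Z$-trivial subspaces and re-verifying (c) for the sum via the Cauchy--Schwarz inequality $(v_{20}+v'_{20})(\bar v_{20}+\bar v'_{20})\le 2u^2+2u'^2\le 2(u+u')^2$, a step absent from your proposal.
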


Recall here that the representation $Z|_{H^2(X,\bR)}$ of $Z$ is defined over $\bQ$ because $\Aut(X)$ acts on $H^2(X,\bZ)$ and $H^2(X,\bR) = H^2(X,\bZ) \otimes_{\bZ} \bR$.

\begin{notation} \label{not_pullback}
\rm From here till Proposition \ref{priv_bis}, for simplicity, the pull-back action of $g \in \Aut(X)$ on $H^*(X,\bC)$, will be denoted by the same $g$, instead of $g^*$.
So for $g,h\in \Aut(X)$ and $v\in H^*(X,\bC)$ the identity $(gh)^*v=h^*(g^*v)$ will be written as $(gh)v=h(g(v))$.
\end{notation}

\begin{lemma}\label{Lpriv}
There is a privileged $Z$-stable subspace $V\subseteq H^2(X,\bR)$ such that the action of $Z$ on $V$ is the identity.
\end{lemma}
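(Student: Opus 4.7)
The plan is to construct a one-dimensional privileged subspace $V \subseteq H^{1,1}(X,\bR) \subseteq H^2(X,\bR)$ spanned by a single rational nef class fixed by $Z$. With $V^{11} = V$ and $V^0 = 0$, condition (c) of Definition \ref{def_priv} is vacuous, and conditions (a)--(b) reduce to the existence of such a class; the $Z$-stability of $V$ and the triviality of $Z$ on $V$ are then automatic.

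First I would set $W^{11} := H^{1,1}(X,\bR)^Z$. Since $\Aut(X)$ acts on $H^2(X,\bZ)$, the representation $Z|_{H^{1,1}(X,\bR)}$ is defined over $\bQ$, so $W^{11}$ is $\bQ$-defined; it is nonzero because $Z|_{H^{1,1}}$ is unipotent and $H^{1,1}(X,\bR) \neq 0$. By the descending chain condition in the finite-dimensional space $H^{1,1}(X,\bR)$, there exist $g_1, \ldots, g_k \in Z$ such that
$$
W^{11} = \bigcap_{i=1}^{k} \ker\bigl((g_i - I)|_{H^{1,1}(X,\bR)}\bigr),
$$
where (per Notation \ref{not_pullback}) $g$ denotes the pullback $g^*$.

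The core step is to produce a nonzero rational nef class inside $W^{11}$ via an iterated leading-term argument. Starting from a rational K\"ahler class $\alpha_0 \in H^{1,1}(X,\bQ) \cap \CK(X)$ (which exists by density of rational classes in the K\"ahler cone), I would inductively construct a nonzero rational nef class $\alpha_i$ fixed by $g_1, \ldots, g_i$. Given $\alpha_{i-1}$, let $d$ be the largest index with $(g_i - I)^d \alpha_{i-1} \neq 0$. The binomial expansion
$$
g_i^m \alpha_{i-1} = \sum_{j=0}^{d} \binom{m}{j}\,(g_i - I)^j \alpha_{i-1}
$$
shows that $m^{-d}\, g_i^m \alpha_{i-1}$ converges as $m \to \infty$ to
$$
\alpha_i := \tfrac{1}{d!}\,(g_i - I)^d \alpha_{i-1},
$$
which lies in $\ker(g_i - I)$, is rational (since $(g_i - I)^d$ is $\bQ$-linear and $\alpha_{i-1}$ is rational), is nonzero by the maximal choice of $d$, and is nef, as a limit of the nef classes $g_i^m \alpha_{i-1}$ rescaled by positive numbers---here $g_i^m \alpha_{i-1}$ is nef because the pullback by an automorphism preserves the nef cone. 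Commutativity of $Z$ ensures $g_i$ preserves the joint fixed subspace of $g_1, \ldots, g_{i-1}$, so $\alpha_i$ remains fixed by the earlier generators. After $k$ iterations, $\alpha_k \in W^{11} \cap H^{1,1}(X,\bQ) \cap \overline{\CK}(X)$ is nonzero.

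Finally I would take $V := \bR \cdot \alpha_k$, a one-dimensional $\bQ$-defined subspace contained in $W^{11}$, so $Z$ acts as the identity on $V$. With $V^{11} = V$ and $V^0 = 0$: condition (a) of Definition \ref{def_priv} is immediate; condition (b) holds because the half-line $\bR_{\geq 0}\,\alpha_k \subseteq V \cap \overline{\CK}(X)$ spans $V$; and condition (c) is vacuous. The main obstacle is simultaneously maintaining rationality, nonvanishing, and nef-ness through each step of the iteration, which is handled by the three observations above: $(g_i - I)^d$ is a $\bQ$-operator applied to a rational input, $d$ is chosen maximally, and the nef cone is closed and preserved by pullbacks.
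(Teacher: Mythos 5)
Your leading-term iteration (extracting $\tfrac{1}{d!}(g_i-I)^d\alpha_{i-1}$ as the limit of $m^{-d}g_i^m\alpha_{i-1}$ and invoking closedness of the nef cone) is sound in itself, and it is essentially the same device the paper uses. But the proof has a genuine gap at its starting point: a rational K\"ahler class $\alpha_0\in H^{1,1}(X,\bQ)\cap\CK(X)$ need not exist. Rational classes are dense in $H^2(X,\bR)$, not in $H^{1,1}(X,\bR)$: the Hodge decomposition is not defined over $\bQ$, so $H^{1,1}(X,\bR)\cap H^2(X,\bQ)$ can be zero (e.g.\ a generic complex torus or a generic K3 surface has no nonzero rational $(1,1)$-class), and by Kodaira's criterion the K\"ahler cone contains a rational class if and only if $X$ is projective. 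For the same reason your $W^{11}=H^{1,1}(X,\bR)^Z$ is not defined over $\bQ$ in general --- the $Z$-action on $H^2(X,\bR)$ is $\bQ$-rational, but its restriction to the non-rational subspace $H^{1,1}(X,\bR)$ is not. So even if you drop the rationality of $\alpha_0$ and run the iteration with a real K\"ahler class, you only obtain a real line $\bR\,\alpha_k$ that need not be defined over $\bQ$, violating condition (a) of Definition \ref{def_priv}. That condition is not cosmetic: the rationality of the privileged space (via $F_\bZ=F\cap H^2(X,\bZ)$) is what later gives the discreteness of $\Imm\psi$ in the proof of Theorem \ref{ThA} (1).

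This is precisely the difficulty the notion of privileged subspace is designed to circumvent. The paper runs the same leading-term argument, but on all of $V_0=H^2(X,\bR)$, which \emph{is} defined over $\bQ$ and on which $z$ acts by an integral unipotent matrix: it sets $\pi=\lim m^{-l}z^m$ and $V_1=\pi(V_0)$, so that $V_1$ is automatically $\bQ$-defined; the price is that $V_1$ carries a possibly nonzero $(2,0)+(0,2)$ part $V_1^0$, which is why condition (c) --- bounding $v_{20}\bar v_{20}$ by $u^2$ for a nef $u\in V^{11}$ --- is built into the definition and verified at each step of the induction. Your argument would give a correct and simpler proof under the extra hypothesis that $X$ is projective (working inside $\NS_\bR(X)$), but it does not establish the lemma for a general compact K\"ahler manifold.
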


\begin{proof}
Set $V_0 := H^2(X, \bR)$, and $\CK_0 := \overline{\CK}(X)$ which has non-empty interior in $V_0^{11} = H^{1,1}(X,\bR)$.
Take any $z\in Z$. Recall that the action of $z$ on $H^2(X,\bR)$ is unipotent, i.e., all eigenvalues are equal to 1.
Therefore, in what follows, for vector spaces, we can use bases defined over $\bQ$.

Let $l\geq 0$ be the minimal integer such that the norm of $z^m$ acting on $V_0$ satisfies $\|z^m\|=\OO(m^l)$ as $m\to +\infty$.
If $l=0$ then $z$ is the identity on $V_0$. This can be seen by using the Jordan canonical form of $z$. So we can take $V=V_0$ when the property $l=0$ holds for all $z$.

Otherwise, fix a $z \in Z$ such that $l\geq 1$. Define $$\pi:V_0\to V_0$$ to be the limit of $m^{-l}z^m$ and $V_1$ the image of $V_0$ by $\pi$.
Using the Jordan canonical form of $z$ with a basis of $V_0$ defined over $\bQ$, we see that $V_1\not=0$, $V_1\not=V_0$ and $V_1$ is defined over $\bQ$.
Since $z^m$ preserves the decomposition $V_0^0\oplus V_0^{11}$, our $V_1$ satisfies Property \hyperref[aaa]{\ref{def_priv} (a)}.

Moreover, $\pi(\CK_0)$ has non-empty interior in $V_1^{11}$ by open mapping theorem applied to $\pi : V_0^{11}\to V_1^{11}$.
By the definition of $\pi$, we have $\pi(\CK_0)\subseteq \CK_0$. Denote by $\CK_1$ the intersection $V_1^{11}\cap \CK_0$.
It has non-empty interior in $V_1^{11}$. So $V_1$ satisfies Property \hyperref[bbb]{\ref{def_priv} (b)}.

For any $v\in V_1^0$, there is a $v^\star\in V_0^0$ such that $v=\lim m^{-l} z^m(v^\star)$. Let $u^\star\in \CK_0$ such that $v^\star_{20}\bar v_{20}^\star\leq u^{\star 2}$.
Define $$u := \lim m^{-l} z^m(u^\star).$$ We have $v_{20}\bar v_{20}\leq u^2$ because $$u^2-v_{20}\bar v_{20}=\lim m^{-2l} z^m(u^{\star 2}-v^\star_{20}\bar v_{20}^\star).$$
Indeed, the class $$m^{-2l} z^m(u^{\star 2}-v^\star_{20}\bar v_{20}^\star)$$ is represented by some weakly positive closed $(2,2)$-current
and the cone $\CE^w_2(X)$ of weakly positive classes in $H^{2,2}(X, \bR)$ is closed. So Property \hyperref[ccc]{\ref{def_priv} (c)} holds for $V_1$. And eventually our $V_1$ is privileged.

For any arbitrary $w\in Z$, as actions on $H^2(X,\bR)$, we have $wz^m=z^mw$. It follows that $w\pi=\pi w$. So $V_1$ is $w$-stable for all $w \in Z$, and hence $Z$-stable.
We can now apply the same argument in order to construct inductively $V_{i+1}\subset V_i$ and $\CK_{i+1}\subset \CK_i$.
For dimension reason, there is an $i$ such that $Z$ is the identity on $V_i$. Take $V=V_i$. We are done.
\end{proof}

\begin{proof}[{\bf Proof of Proposition \ref{priv}.}]
Let $V$ be as in Lemma \ref{Lpriv}. We show that for any $g\in G$, $g(V)$ is also privileged.
Property \hyperref[aaa]{\ref{def_priv} (a)} is clear because $g$ is defined over $\bQ$ and preserves the Hodge decomposition $H^{2,0}(X,\bC)\oplus H^{1,1}(X,\bC)\oplus H^{0,2}(X,\bC)$.
Property \hyperref[bbb]{\ref{def_priv} (b)} is a consequence of the fact that $g$ is an isomorphism of $H^{1,1}(X,\bR)$ and preserves the nef cone $\overline{\CK}(X)$.
Property \hyperref[ccc]{\ref{def_priv} (c)} is also clear because $g$ preserves the weak positivity.
For any $z\in Z$ and $g \in G$, we have $z' := gzg^{-1}\in Z$ since $Z$ is normalized by $G$, and hence for any $v\in V$ (see Notation \ref{not_pullback})
$$z(g(v))=g(z'(v))=g(v).$$
Thus, the action of $Z$ on $g(V)$ is the identity.

Consider $F=\sum V$, where $V$ runs over all subspaces satisfying Lemma \ref{Lpriv}. We have $g(F)\subseteq F$ and hence $g(F)=F$ for dimension reason.
Also by dimension reason, the last sum is in fact a finite sum. Therefore, $F$ is defined over $\bQ$ because it has a system of generators defined over $\bQ$.
Like on $V$, the group $Z$ acts as the identity on $F$, and $F^{11}$ is spanned by nef classes.
Hence $F$ satisfies Properties \hyperref[aaa]{\ref{def_priv} (a)} and \hyperref[bbb]{\ref{def_priv} (b)}.

To finish the proof of Proposition \ref{priv}, we still have to show that $F$ satisfies Property \hyperref[ccc]{\ref{def_priv} (c)}.
For this purpose, it is enough to show that if $v_{20},v_{20}'\in H^{2,0}(X, \bC)$ and $u,u'\in \overline\CK(X)$ satisfy $v_{20}\bar v_{20}\leq u^2$ and $v_{20}'\bar v_{20}'\leq u'^2$,
then $(v_{20}+v_{20}')(\bar v_{20}+\bar v_{20}')\leq 2(u+u')^2$. We can represent $v_{20}-v'_{20}$ by a closed holomorphic 2-form $\alpha$.
Since the $(2,2)$-form $\alpha\wedge \bar\alpha$ is weakly positive, we deduce the following Cauchy--Schwarz inequality
$$(v_{20}-v'_{20})(\bar v_{20}-\bar v'_{20})\geq 0 .$$
It follows that
$$(v_{20}+v'_{20})(\bar v_{20}+\bar v'_{20})\leq 2v_{20}\bar v_{20}+2v'_{20}\bar v'_{20} \leq 2u^2+2u'^2.$$
On the other hand, we can approximate $u$ and $u'$ by classes of K\"ahler forms and hence $uu'$ by classes of positive $(2,2)$-forms.
Since the cone $\CE_2^w(X)$ is closed in $H^{2,2}(X,\bR)$, we obtain $uu'\geq 0$. Therefore, the right-hand side class in the latter chain of inequalities is bounded by $2(u+u')^2$.
This ends the proof of Proposition \ref{priv}.
\end{proof}

\begin{proposition}\label{priv_bis}
Let $G$, $Z$ and $F$ be as in Proposition \ref{priv}. Then, for every $g\in G$ there exists a nef class $v\ne 0$ in $F^{11} \subseteq F$
such that $gv=\lambda v$, where $\lambda$ is the spectral radius of $g$ acting on $F$.
\end{proposition}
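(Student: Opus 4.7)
The plan is first to identify the spectral radius $\rho$ of $g$ on $F$ with the spectral radius $\rho_{11}$ of $g$ on $F^{11}$, and then to apply the classical Perron--Frobenius theorem for proper cones to the action of $g|_{F^{11}}$ on $F^{11}\cap\overline{\CK}(X)$. Since $g$ preserves the Hodge decomposition and the splitting $F=F^0\oplus F^{11}$ is $g$-stable, $\rho=\max(\rho_0,\rho_{11})$, where $\rho_0$ is the spectral radius on $F^0$; hence everything reduces to establishing $\rho_0\le\rho_{11}$.

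For the spectral comparison, I would pick an eigenvalue $\mu\in\bC$ of $g$ on $F^0\otimes_\bR\bC$ with $|\mu|=\rho_0$ and an eigenvector $v_{20}\in H^{2,0}(X,\bC)\setminus\{0\}$ (such an eigenvector exists, because $g$ preserves $H^{2,0}(X,\bC)$ and the $(2,0)$- and $(0,2)$-parts of $F^0\otimes_\bR\bC$ carry the same spectral radius via complex conjugation). The real class $v:=v_{20}+\bar v_{20}$ lies in $F^0$, so Property \hyperref[ccc]{\ref{def_priv} (c)} supplies a nef $u\in F^{11}$ with $v_{20}\bar v_{20}\le u^2$. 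Since $g^m$ preserves the cone $\CE_2^w(X)$ and is a ring homomorphism on cohomology, applying $g^m$ yields $|\mu|^{2m}\,v_{20}\bar v_{20}\le(g^m u)^2$. The class $v_{20}\bar v_{20}$ is nonzero (as observed in the proof of Lemma \ref{lemma_Fu_Li_bis}), so taking norms, using continuity of the cup product, and then extracting $(2m)$th roots and letting $m\to\infty$ gives $|\mu|\le\lim\|g^m|_{F^{11}}\|^{1/m}=\rho_{11}$, hence $\rho_0\le\rho_{11}$.

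With $\rho=\rho_{11}$ in hand, the conclusion follows by Perron--Frobenius. The cone $\CK_F:=F^{11}\cap\overline{\CK}(X)$ is closed, convex and salient (as a subcone of the salient cone $\overline{\CK}(X)$), and has nonempty interior in $F^{11}$ by Property \hyperref[bbb]{\ref{def_priv} (b)}; moreover it is preserved by $g|_{F^{11}}$ since $g$ stabilizes both $F$ and $\overline{\CK}(X)$. The classical Perron--Frobenius theorem for proper cones in finite-dimensional real vector spaces then produces a nonzero $v\in\CK_F$ with $gv=\rho_{11}v=\rho v$, which is the desired nef eigenvector in $F^{11}\subseteq F$. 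The main obstacle is precisely the identification $\rho_0\le\rho_{11}$: a priori there is no way to compare eigenvalues of $g$ on holomorphic $2$-forms with those on $(1,1)$-classes, and it is exactly Property \hyperref[ccc]{\ref{def_priv} (c)} of a privileged subspace, combined with the preservation of weak positivity by $g^*$, that bridges this gap.
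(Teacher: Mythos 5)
Your overall strategy coincides with the paper's: reduce the statement to the spectral comparison $\rho_0\le\rho_{11}$ by combining Property (c) of Definition \ref{def_priv} with the $g^*$-invariance of the closed salient cone $\CE_2^w(X)$, and then apply Birkhoff's theorem (Theorem \ref{B-cone}) to the cone $F^{11}\cap\overline{\CK}(X)$, which spans $F^{11}$ by Property (b). The Perron--Frobenius half and the norm estimate $|\mu|^m\lesssim\|g^m(u)\|$ are fine.

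There is, however, a gap in your eigenvector step. You assert that $g$ acting on $F^0\otimes_\bR\bC$ admits an eigenvector $v_{20}$ of maximal modulus lying in $H^{2,0}(X,\bC)$, and that the class $v:=v_{20}+\bar v_{20}$ lies in $F^0$. Neither claim is automatic: $F^0$ is only a \emph{real} subspace of $H^{2,0}(X,\bC)\oplus H^{0,2}(X,\bC)$, and $F^0\otimes_\bR\bC$ need not split as the direct sum of its intersections with $H^{2,0}(X,\bC)$ and $H^{0,2}(X,\bC)$. For instance, if $F^0=\bR(\alpha+\bar\alpha)$ for a nonzero class $\alpha\in H^{2,0}(X,\bC)$, then $F^0\otimes_\bR\bC=\bC(\alpha+\bar\alpha)$ meets $H^{2,0}(X,\bC)$ only in $0$, so no eigenvector of $g|_{F^0\otimes_\bR\bC}$ lies in $H^{2,0}(X,\bC)$. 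If instead you take an eigenvector in the $(2,0)$-projection of $F^0\otimes_\bR\bC$ (which is $g$-stable and realizes every eigenvalue modulus of $g|_{F^0}$ up to complex conjugation), that eigenvector has the form $v''_{20}=v_{20}+iv'_{20}$ with $v,v'\in F^0$, and $v''_{20}+\bar v''_{20}$ need not belong to $F^0$, so Property (c) cannot be invoked for it as a single class. The repair is exactly the paper's: apply Property (c) separately to $v$ and $v'$ to obtain nef classes $u,u'\in F^{11}$, and use the Cauchy--Schwarz inequality $(v_{20}-v'_{20})(\bar v_{20}-\bar v'_{20})\ge 0$ established in the proof of Proposition \ref{priv} to get $v''_{20}\bar v''_{20}\le 2(u+u')^2$. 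With $u'':=\sqrt{2}\,(u+u')$ in place of your $u$, the remainder of your argument goes through unchanged.
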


\begin{proof}
If $\gamma$ is a complex eigenvalue of $g$ acting on $F^0$, then up to replace $\gamma$ with $\bar\gamma$,
there are vectors $v,v'\in F^0$ such that $v''_{20}:=v_{20}+iv'_{20}\not=0$ and $g(v''_{20})=\gamma v''_{20}$. Therefore, we have
$$g^m(v''_{20}\bar v''_{20})=|\gamma|^{2m} v''_{20}\bar v''_{20}.$$
Let $u,u'\in F^{11}$ be as in Property \hyperref[ccc]{\ref{def_priv} (c)} for $v,v'$ respectively. As in the proof of Proposition \ref{priv}, for $u''=\sqrt{2}(u+u')$, we have
$$v_{20}''\bar v_{20}''\leq u''^2 \quad \text{and hence} \quad |\gamma|^{2m} v''_{20}\bar v''_{20} \le g^m(u''^2).$$
It follows that
$$|\gamma|^m \lesssim \|g^m(u'')\|.$$ Therefore, $|\gamma| \le \rho(g|_{F^{11}})$, where $\rho(g|_{F^{11}})$ is the spectral radius of $g$ acting on $F^{11}$.
So $g$ acts on $F^{11}$ with spectral radius $\lambda$ and preserves the cone $C := F^{11}\cap\overline\CK(X)$.
Since this cone has non-empty interior in $F^{11}$, the proposition follows from Theorem \ref{B-cone} below.
\end{proof}

We quote Birkhoff's generalization of the Perron--Frobenius theorem.

\begin{theorem}[{cf. \cite{Birkhoff67}}] \label{B-cone}
Let $C$ be a strictly (i.e., salient) convex closed cone of a finite-dimensional $\bR$-vector space $V$ such that $C$ spans $V$ as a vector space.
Let $g : V \to V$ be an $\bR$-linear endomorphism such that $g(C) \subseteq C$.
Then the spectral radius $\rho(g)$ is an eigenvalue of $g$ and there is an eigenvector $L_g \in C$ corresponding to the eigenvalue $\rho(g)$.
\end{theorem}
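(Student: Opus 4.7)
The result is the classical Krein--Rutman (or, in finite dimensions, Perron--Frobenius) theorem for cone-preserving linear maps, quoted here from Birkhoff. My plan has two main parts: first, use a Brouwer fixed-point argument on a compact cross-section of $C$ to produce an eigenvector of $g$ in $C$ with some nonnegative eigenvalue; second, exploit the salience of $C$ to identify this eigenvalue with the spectral radius $\rho(g)$.

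For the fixed-point step, since $C$ is closed and salient and spans $V$, the dual cone $C^{*}\subset V^{*}$ has nonempty interior, so one fixes $\phi\in V^{*}$ with $\phi(x)>0$ for every $x\in C\setminus\{0\}$. The cross-section $K:=\{x\in C:\phi(x)=1\}$ is then a nonempty compact convex set. To avoid issues from $g$ possibly vanishing on some element of $K$, I would introduce the perturbations $g_{\epsilon}(x):=g(x)+\epsilon\phi(x)e$ for a fixed $e\in\operatorname{int}(C)$ and $\epsilon>0$. Because $g$ preserves $C$, the perturbation satisfies $g_{\epsilon}(C\setminus\{0\})\subseteq\operatorname{int}(C)$, so $f_{\epsilon}(x):=g_{\epsilon}(x)/\phi(g_{\epsilon}(x))$ is a continuous self-map of $K$. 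Brouwer's theorem yields a fixed point $L_{\epsilon}\in K$ satisfying $g_{\epsilon}(L_{\epsilon})=\lambda_{\epsilon}L_{\epsilon}$ with $\lambda_{\epsilon}>0$, and necessarily $L_{\epsilon}\in\operatorname{int}(C)$ since $L_{\epsilon}=\lambda_{\epsilon}^{-1}g_{\epsilon}(L_{\epsilon})$.

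The key step is the claim that any cone-preserving $h\in\End(V)$ admitting an eigenvector $L\in\operatorname{int}(C)$ with eigenvalue $\lambda\geq 0$ must satisfy $\lambda=\rho(h)$. Supposing toward a contradiction that $h$ has some eigenvalue $\mu$ with $|\mu|>\lambda$, one takes a corresponding complex eigenvector $v=a+ib$ with $a,b\in V$ and observes that $L+ta+sb\in C$ for sufficiently small $t,s\in\bR$. Iterating $h$ and normalizing by $|\mu|^{n}$, the $L$-component $(\lambda/|\mu|)^{n}L$ vanishes in the limit, while the $(a,b)$-component rotates by angle $n\arg\mu$ in the $(a,b)$-plane. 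A short case analysis on $\arg\mu$ (a simpler direct scaling in the real case; dense rotations when $\arg\mu/2\pi\notin\bQ$; a finite rotation orbit summing to zero in the remaining rational case) produces a family of vectors in $C$ whose sum is zero but whose individual terms are not all zero, contradicting the salience of $C$ unless $a=b=0$. Applied to $h=g_{\epsilon}$ and $L=L_{\epsilon}$, this identifies $\lambda_{\epsilon}=\rho(g_{\epsilon})$.

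Finally, passing to the limit $\epsilon\to 0$ along a subsequence, compactness of $K$ gives $L_{\epsilon}\to L\in K\subseteq C\setminus\{0\}$, while the continuous dependence of eigenvalues on matrix entries gives $\rho(g_{\epsilon})\to\rho(g)$. Hence $\lambda_{\epsilon}\to\rho(g)$ and $g(L)=\rho(g)L$ follows in the limit, producing the desired $L_{g}:=L\in C$. The main obstacle I anticipate is the middle step: Brouwer's theorem alone only produces \emph{some} nonnegative eigenvalue, and upgrading it to the spectral radius genuinely requires the cone-geometric argument using salience sketched above.
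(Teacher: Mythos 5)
Your proof is correct, but there is nothing in the paper to compare it against: Theorem \ref{B-cone} is quoted as a known result with the citation ``cf.\ \cite{Birkhoff67}'' and the paper supplies no proof of its own. As a self-contained argument, yours holds up. The existence of $\phi\in\operatorname{int}(C^*)$ with $\phi>0$ on $C\setminus\{0\}$ is the standard consequence of salience and closedness; the compact cross-section $K$, the perturbation $g_\epsilon=g+\epsilon\,\phi(\cdot)\,e$ mapping $C\setminus\{0\}$ into $\operatorname{int}(C)$, and Brouwer then give an eigenvector $L_\epsilon\in\operatorname{int}(C)$ with eigenvalue $\lambda_\epsilon>0$. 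The middle step is indeed the crux and your treatment of it is sound: for an invariant $2$-plane attached to an eigenvalue $\mu$ with $|\mu|>\lambda$, the vectors $h^n(L+w)/|\mu|^n$ stay in $C$, their $L$-component dies, and the surviving rotated vectors force (via density of the rotation orbit, or via the vanishing sum $\sum_k R_{2\pi pk/q}w=0$ together with the fact that in a salient cone a sum of elements of $C$ can vanish only if each summand does) that $w=0$; since $L_\epsilon$ is a genuine eigenvector and the $(a,b)$-plane is $h$-invariant, there are no generalized-eigenvector cross terms to worry about. The final limit $\epsilon\to 0$ works because the spectral radius is continuous in the operator and $K$ is compact, and it correctly covers the degenerate case $\rho(g)=0$. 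For reference, the more common textbook route avoids Brouwer altogether: apply the resolvent series $\sum_{m\ge 0}t^m g^m(x_0)$ to some $x_0\in\operatorname{int}(C)$, normalize, and let $t\uparrow\rho(g)^{-1}$; the normalized limits accumulate on an eigenvector in $C$ for the eigenvalue $\rho(g)$. That argument is shorter, but yours is complete and more elementary in the sense of using only finite-dimensional topology and the geometry of salient cones.
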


The following lemma slightly generalizes \cite[Proposition 4.1]{DS04} which only deals with commutative groups.
We will use this lemma later to obtain the existence of common eigenvectors in the proof of Proposition \ref{PropA}.
\begin{lemma}\label{cB-cone}
Let $C$ be a strictly (i.e., salient) convex closed cone of a finite-dimensional $\bR$-vector space $V$ such that $C$ spans $V$ as a vector space.
Let $G \le \GL(V)$ and let $N \unlhd G$ be a finite normal subgroup such that $G(C) \subseteq C$ and the quotient group $G/N$ is commutative.
Then for every $g \in G$ there is a non-zero element $L_g$ in $C$ such that $g(L_g) = \rho(g) L_g$ with $\rho(g)$ the spectral radius of $g$,
$G(L_g) \subseteq \bR_{+} L_g$, and $n(L_g) = L_g$ for any $n \in N$.
\end{lemma}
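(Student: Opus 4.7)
The plan is to reduce to the commutative case treated in \cite[Proposition 4.1]{DS04} by passing to the subspace of $N$-fixed vectors. Since $N$ is finite, I would first introduce the averaging projector $\pi_N := \frac{1}{|N|}\sum_{n \in N} n \in \End(V)$, whose image is the fixed subspace $V^N := \{v \in V : n(v) = v \text{ for all } n \in N\}$. The normality of $N$ in $G$ implies that $V^N$ is $G$-stable, for if $v \in V^N$ and $g \in G$ then $n(g(v)) = g((g^{-1} n g)(v)) = g(v)$ for every $n \in N$. Convexity of $C$ together with $N(C) \subseteq C$ yields $\pi_N(C) \subseteq C \cap V^N$, and since $C$ spans $V$, applying $\pi_N$ shows that $C \cap V^N$ spans $V^N$; hence $C \cap V^N$ is a salient closed convex cone spanning $V^N$.

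Next I would show that the spectral radius of $g$ restricted to $V^N$ coincides with $\rho(g)$. Applying Theorem \ref{B-cone} to $g$ acting on $V$ produces a non-zero $L \in C$ with $g(L) = \rho(g) L$; set $L' := \pi_N(L)$. Each $n(L)$ lies in $C$, and the salience of $C$ forces $L' \ne 0$, for otherwise $L = -\sum_{n \ne 1} n(L) \in C \cap (-C) = \{0\}$, a contradiction. For the eigenvalue property, normality of $N$ gives $g n = (gng^{-1}) g$ with $gng^{-1} \in N$, and $n \mapsto gng^{-1}$ permutes $N$, so
\[
g(L') = \tfrac{1}{|N|} \sum_{n \in N} (gng^{-1})(g(L)) = \tfrac{\rho(g)}{|N|} \sum_{m \in N} m(L) = \rho(g) L'.
\]
Thus $L' \in C \cap V^N$ is a non-zero $\rho(g)$-eigenvector, which establishes the desired equality of spectral radii.

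Finally, since $N$ acts trivially on $V^N$, the action of $G$ on $V^N$ factors through the commutative quotient $G/N$. I would then apply \cite[Proposition 4.1]{DS04} to $G/N \le \GL(V^N)$ preserving the salient closed convex cone $C \cap V^N$ to obtain, for each $g \in G$, a non-zero $L_g \in C \cap V^N$ with $g(L_g) = \rho(g|_{V^N}) L_g = \rho(g) L_g$ and $G(L_g) \subseteq \bR_{+} L_g$; the condition $n(L_g) = L_g$ for every $n \in N$ is automatic from $L_g \in V^N$. The main obstacle is the spectral-radius equality $\rho(g|_{V^N}) = \rho(g)$, which relies essentially on the salience of $C$ used to guarantee non-vanishing of the averaged eigenvector $\pi_N(L)$.
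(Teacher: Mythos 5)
Your proof is correct and follows the same overall reduction as the paper: average over $N$ to pass to the $N$-fixed subspace $V^N=\pi_N(V)$, note that it is $G$-stable by normality, that $C\cap V^N$ is a salient closed convex cone spanning $V^N$, and then invoke \cite[Proposition 4.1]{DS04} for the commutative quotient $G/N$. The one step you handle differently is the spectral-radius equality $\rho(g|_{V^N})=\rho(g)$: the paper argues via norm growth, observing that for $L$ in the interior of $C$ the growth of $\|g^m(L)\|$ is comparable to $\|g^m\|$ on $V$, and that $C\cap V^N$ contains interior points of $C$ (namely $\pi_N(L)$ for $L$ interior), so the norms of $g^m$ on $V^N$ and on $V$ are comparable. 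You instead take the Birkhoff eigenvector $L\in C$ with $g(L)=\rho(g)L$, show $\pi_N(L)\ne 0$ by salience, and use $g\pi_N=\pi_N g$ (from normality) to exhibit $\pi_N(L)$ as a $\rho(g)$-eigenvector inside $C\cap V^N$. Both arguments are valid; yours is more direct and pinpoints exactly where salience is used, while the paper's norm-comparison gives the slightly stronger statement that the full growth rates (not just the spectral radii) of $g^m$ on $V$ and $V^N$ agree, which is in the spirit of how such comparisons are used elsewhere in \cite{DS04}.
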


\begin{proof}
Consider the linear map $\pi:V\to V$ defined by $$\pi(L):=\frac{1}{|N|} \sum_{n\in N} n(L),$$
and denote by $V'$ its image. This is the subspace of all vectors in $V$ which are fixed by $N$.
Hence, since $N$ is normal in $G$, it is not difficult to see that $V'$ is invariant by $G$.
Define $C':=C\cap V'=\pi(C)$. Since $C$ is closed, strictly convex and spans $V$, $C'$ is also closed, strictly convex, and spans $V'$.
By taking $L$ in the interior of $C$, we see that $C'$ contains vectors in the interior of $C$.

Observe that since $C$ is strictly convex and invariant by $G$, for every vector $L$ in the interior of $C$ and for any $g\in G$, when $n\to\infty$,
the growth of $\|g^n(L)\|$ is comparable with the growth of the norm $\|g^n\|$ of $g^n$ acting on $V$.
We then conclude that the norm $\|g^n\|$ of $g^n$ acting on $V'$ is comparable with the one on $V$ as $n\to\infty$.
In particular, the spectral radius of $g$ on $V'$ is equal to the one on $V$.

Now replacing $G,V,C$ by $G/N$, $V'$ and $C'$ reduces the problem to the case of the action of a commutative group. This case was proved in \cite[Proposition 4.1]{DS04}.
\end{proof}

Before the end of this section, we recall the cone theorem of Lie--Kolchin type in \cite{KOZ09}, which will be used in the proof of our main Theorem \ref{ThA}.

\begin{theorem}[{cf. \cite[Theorem 1.1]{KOZ09}}]
\label{Lie-Kolchin cone}
Let $V$ be a finite-dimensional real vector space and $\{0\} \ne C \subset V$ a strictly (i.e., salient) convex closed cone.
Suppose that $G \le \GL(V)$ is a solvable group, and it has connected Zariski closure in $\GL(V_\bC)$ (always possible by replacing $G$ with a finite-index subgroup) and $G(C) \subseteq C$.
Then $G$ has a common eigenvector in the cone $C$.
\end{theorem}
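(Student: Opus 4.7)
\medskip

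\noindent\textbf{Proof proposal.} The plan is to proceed by induction on the derived length of $G$, reducing the general solvable case to the abelian case, where one can invoke the Perron--Frobenius type result (Theorem \ref{B-cone}) already quoted in the paper.

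\emph{Base case: $G$ abelian.} For a fixed element $g_0 \in G$, Theorem \ref{B-cone} gives a nonzero $L_0 \in C$ with $g_0(L_0) = \rho(g_0) L_0$. The $\rho(g_0)$-eigencone
\[
C_0 := \{L \in C \,:\, g_0(L) = \rho(g_0) L\}
\]
is a nonempty salient closed convex subcone of $V_0 := \ker(g_0 - \rho(g_0)\id)$, and since every $h \in G$ commutes with $g_0$, each $h$ preserves $V_0$ and $C_0$. If $C_0$ spans $V_0$ we apply Theorem \ref{B-cone} again to the action of a second generator on $(V_0, C_0)$; otherwise we first replace $V_0$ by the linear span of $C_0$, which is still $G$-stable because $G$ preserves $C_0$. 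Iterating this descent (the ambient dimension drops whenever the span of the current cone is strictly smaller), we arrive in finitely many steps at a subcone on which every element of $G$ acts by a scalar, producing a common eigenvector in $C$.

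\emph{Inductive step.} Assume the result for solvable groups of strictly smaller derived length. Let $G' := [G,G]$; its Zariski closure is the commutator of the Zariski closure of $G$, hence connected and solvable of smaller derived length. By induction, the action of $G'$ on $(V,C)$ has a common eigenvector in $C$. Consider the set of characters
\[
\Sigma := \bigl\{\chi : G' \to \bR_{>0} \,:\, C_\chi := \{L \in C : h(L) = \chi(h) L \text{ for all } h \in G'\} \ne \{0\}\bigr\}.
\]
Each $C_\chi$ is a salient closed convex cone, the $C_\chi$ are linearly independent (distinct eigenspaces), and $C$ contains the convex hull of the $C_\chi$. Since $V$ is finite-dimensional, $\Sigma$ is finite. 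The group $G$ acts on $\Sigma$ by $(g \cdot \chi)(h) = \chi(g^{-1}hg)$, which permutes the $C_\chi$. Using connectedness of the Zariski closure of $G$ (so that no nontrivial finite quotient exists after we pass to a finite-index subgroup if needed), one shows that $G$ fixes every $\chi \in \Sigma$ individually; hence each $C_\chi$ is $G$-stable. On $C_\chi$ the subgroup $G'$ acts by scalars, so the induced action of $G/G' $ on $V_\chi := \text{span}(C_\chi)$ is well-defined and abelian, with $G/G'$ preserving the salient closed convex cone $C_\chi$ which spans $V_\chi$. The base case then furnishes a common $(G/G')$-eigenvector in $C_\chi$, which is automatically a common $G$-eigenvector in $C$.

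\emph{Main obstacle.} The delicate point is the inductive step: proving that every character $\chi \in \Sigma$ is fixed by the conjugation action of $G$, rather than merely permuted. The natural argument uses the assumption that the Zariski closure of $G$ is connected: the orbit $G \cdot \chi$ is finite, and the stabilizer of $\chi$ is Zariski closed of finite index, hence equal to all of $G$. A secondary technical point is verifying that the scalar $\chi(h)$ is positive real (necessary for $C_\chi \ne \{0\}$), which follows from the fact that $h$ preserves the salient cone $C$ and has a fixed ray inside it. The base case itself also requires care that the dimension-dropping procedure terminates, which is where the salience and closedness of $C$ enter essentially.
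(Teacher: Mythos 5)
The paper does not prove this statement: Theorem \ref{Lie-Kolchin cone} is quoted verbatim from \cite[Theorem 1.1]{KOZ09}, so there is no internal proof to compare against. Your induction on derived length is the natural Lie--Kolchin-type argument and is, in substance, the strategy of the cited source; the overall structure (abelian base case via iterated top-eigenvalue eigencones, then decomposition into character subcones of $[G,G]$ permuted by $G$ and pinned down by connectedness) is sound. Three points need tightening. First, in the base case $G$ is not assumed finitely generated, so ``iterate over generators'' should be replaced by: among all eigencones reachable by the descent, pick one whose span has minimal dimension; for that cone every element of $G$ either acts as a scalar on its span or would strictly drop the dimension, so minimality forces the former. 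Second, the crux ``the stabilizer of $\chi$ is Zariski closed of finite index, hence all of $G$'' is correct but your parenthetical justification (``no nontrivial finite quotient exists'') is false for abstract groups with connected Zariski closure (e.g.\ $\langle 2\rangle\le\GL_1(\bR)$ surjects onto $\bZ/2$). The correct route: $g\cdot\chi=\chi$ iff $g$ preserves the simultaneous eigenspace $E_\chi$, so $\mathrm{Stab}_G(\chi)=G\cap\mathrm{Stab}_{\GL(V)}(E_\chi)$ is the trace on $G$ of a Zariski-closed subgroup; being of finite index, its Zariski closure is a closed finite-index subgroup of the connected group $\overline G$, hence equals $\overline G$, whence $G\subseteq\mathrm{Stab}_{\GL(V)}(E_\chi)$. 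Third, ``$G'$ acts by scalars on $C_\chi$, so the $G/G'$-action on $V_\chi$ is well-defined'' is a non sequitur as written; it is nevertheless true because each commutator $[g,h]$ restricted to the $G$-stable subspace $V_\chi$ has determinant $1$ while acting by the positive scalar $\chi([g,h])$, forcing $\chi([g,h])=1$ and hence $\chi|_{G'}\equiv 1$, so $G|_{V_\chi}$ is genuinely abelian (alternatively, run the base-case eigencone argument for the projective action). With these patches the proof is complete.
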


\begin{remark}
Note that in this theorem the connectedness of the Zariski closure of $G$ in $\GL(V_\bC)$ is necessary. Indeed, we have the following example.
Let $G$ be the subgroup of $\GL(2,\bR)$ consisting of all invertible $2\times2$ matrices $(a_{ij})_{1\leq i,j\leq 2}$ with non-negative entries
which are diagonal (i.e., $a_{12}=a_{21}=0$) or anti-diagonal (i.e., $a_{11}=a_{22}=0$).
Then $G$ is a solvable Lie group and its Zariski closure in $\GL(2,\bC)$ has two connected components.
It preserves the cone of vectors with non-negative coordinates. However, there is no common eigenvector of $G$ in this cone.
\end{remark}

\begin{remark}\label{rL-Kcone}
Let $V$ be a finite-dimensional real vector space and $\{0\} \ne C \subset V$ a strictly (i.e., salient) convex closed cone.
Suppose that $G \le \GL(V)\le \GL(V_\bC)$ is a virtually solvable group and $G(C) \subseteq C$.
Let $G_1 \unlhd G$ be the preimage, via the natural homomorphism $G \to \GL(V_{\bC})$,
of the identity connected component of the Zariski closure of $G$ in $\GL(V_{\bC})$.
This connected component is exactly the Zariski closure of $G_1$ in $\GL(V_{\bC})$ and is denoted by $\overline G_1$.
So $\overline G_1$ is solvable because it is virtually solvable and connected.
We then deduce that $G_1$ is solvable and we can apply Theorem \ref{Lie-Kolchin cone} to $G_1$.
\end{remark}

\section{Generalization of Theorem \ref{ThA'} and proof of Proposition \ref{PropA}} \label{section_main_th}

Theorem \ref{ThA'} will follow from the more general form below.
\begin{theorem}\label{ThA}
Let $X$ be a compact K\"ahler manifold of dimension $n \ge 2$ and $G \le \Aut(X)$ a group of automorphisms
such that the null-entropy subset $N(G)$ is a subgroup of (and hence normal in) $G$ and $G/N(G) \cong \bZ^{\oplus n-1}$.
Then we have:
\begin{itemize}
\item[(1)] The representation of $N(G)$ on $\oplus_{0\leq p,q\leq n} H^{p,q}(X, \bC)$ is a finite group. Hence $N(G)$ is virtually contained in $\Aut_0(X)$.
\item[(2)] Either $N(G)$ is a finite group and hence $G$ is virtually a free abelian group of rank $n-1$,
or $X$ is a complex torus and $G \cap \Aut_0(X) = N(G) \cap \Aut_0(X)$ is Zariski-dense in $\Aut_0(X)\cong X$.
\item[(3)] $G /(G \cap \Aut_0(X))$ and $G |_E$ are virtually free abelian groups of rank $n-1$ for any $G$-stable (real or complex)
vector subspace $E$ of $\oplus_{0\le p,q\le n} H^{p,q}(X, \bC)$ containing $H^{p,p}(X,\bR)$ for some $1\leq p\leq n-1$, e.g., $H^2(X,\bR)$ or $H^{1,1}(X,\bR)$.
\end{itemize}
\end{theorem}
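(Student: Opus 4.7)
The main work is in proving part (1); parts (2) and (3) follow relatively quickly. By Proposition \ref{LF}, for (1) it suffices to show that, after replacing $G$ by a finite-index subgroup, $N(G)|_{H^{1,1}(X,\bR)}$ is finite, and by Corollary \ref{cor_Fu_Li} this reduces to producing a nef and big class $A \in H^{1,1}(X,\bR)$ fixed by $N(G)$. Following the sketch in the introduction, the plan is to first construct a $G$-stable privileged subspace $F \subseteq H^2(X,\bR)$ on which $N(G)$ acts trivially, then extract common nef eigenclasses of $G|_F$ whose sum is the desired $A$.

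The first step: since $G/N(G) \cong \bZ^{\oplus n-1}$ is abelian, Theorem \ref{Z-TitsTh} makes $G$ virtually solvable, and as every element of $N(G)$ has null entropy with integer characteristic polynomial on cohomology, Kronecker's theorem together with Kolchin's theorem yield that $N(G)|_{H^2(X,\bC)}$ is virtually unipotent; replace $G$ by a finite-index subgroup so this representation is genuinely unipotent. The centre of $N(G)|_{H^2(X,\bC)}$ lifts to a normal subgroup $Z_1 \unlhd G$ with $Z_1|_{H^2(X,\bC)}$ nontrivial, commutative, and unipotent; Proposition \ref{priv} then produces a $G$-stable privileged $F_1 \subseteq H^2(X,\bR)$ on which $Z_1$ acts trivially. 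Iterating the same construction along the upper central series of $N(G)|_{H^2(X,\bC)}$ gives a decreasing chain of $G$-stable privileged subspaces, terminating in an $F$ on which all of $N(G)$ acts trivially. Thus $G|_F$ is abelian.

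Now apply Proposition \ref{priv_bis} and Lemma \ref{cB-cone} to the cone $F^{11} \cap \overline{\CK}(X)$, together with the arguments of \cite{DS04}, to extract common nef eigenclasses $L_1, \ldots, L_{s+1}$ of $G|_F$ with associated characters $\chi_i$ such that $L_1 \cdots L_{s+1} \neq 0$ in $H^{s+1,s+1}(X,\bR)$ and the map $\psi : G|_F \to \bR^{\oplus s}$, $g \mapsto (\chi_1(g), \ldots, \chi_s(g))$, has $\Ker \psi = N(G|_F)$ with discrete image generating $\bR^{\oplus s}$; so $G|_F/N(G|_F) \cong \bZ^{\oplus s}$. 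The main obstacle is to force $s = n-1$: assuming $s < n-1$, extend the $L_i$ to a full quasi-nef sequence as in \cite[\S 2.7]{Zhang-Invent} and set $\widetilde N := \pi^{-1}(N(G|_F))$ for $\pi : G \twoheadrightarrow G|_F$; then $\widetilde N/N(G)$ is simultaneously free abelian of rank $n-1-s$ (from $G/N(G) \cong \bZ^{\oplus n-1}$) and embeds as a discrete subgroup of $\bR^{\oplus n-s-2}$ via the extended characters, a contradiction. Hence $s = n-1$ and $A := L_1 + \cdots + L_n$ is a nef and big class fixed by $N(G)$, completing (1).

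Part (2) follows from the dichotomy on $\Aut_0(X)$: if trivial, then (1) makes $N(G)$ finite and $G$ virtually $\bZ^{\oplus n-1}$; if not, the maximality $r(G) = n-1$ forbids any nontrivial $G$-equivariant meromorphic fibration, and \cite{FZ13} then forces $\alb_X$ to be bimeromorphic -- the other option, almost-homogeneity under a linear algebraic group, would render $G$ null-entropy, contradicting $r(G) \ge 1$. Hence $X$ is a complex torus; the identification $G \cap \Aut_0(X) = N(G) \cap \Aut_0(X)$ is immediate since $\Aut_0(X)$-elements are null-entropy, and Zariski density follows as any proper closed subvariety invariant under $G \cap \Aut_0(X)$ on $\Aut_0(X) \cong X$ would yield a forbidden $G$-equivariant fibration. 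For (3), note $G \cap \Aut_0(X) \subseteq N(G)$, giving $G/(G \cap \Aut_0(X))$ a surjection to $\bZ^{\oplus n-1}$ with kernel $N(G)/(N(G) \cap \Aut_0(X))$ finite by (1); for any $G$-stable $E \supseteq H^{p,p}(X,\bR)$ with $1 \le p \le n-1$, the kernel of $G \to G|_E$ is null-entropy and hence contained in $N(G)$, so $G|_E$ is likewise a finite extension of $\bZ^{\oplus n-1}$ by the finite group $N(G)|_E$.
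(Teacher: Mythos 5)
Your proposal is correct and follows essentially the same route as the paper's proof: the same reduction via Proposition \ref{LF} and Corollary \ref{cor_Fu_Li}, the same inductive construction of the $G$-stable privileged subspace $F$ killing $N(G)$, the same Dinh--Sibony character argument with the quasi-nef-sequence contradiction forcing $s=n-1$, and the same almost-homogeneous-versus-Albanese dichotomy for (2) and kernel argument for (3). The only slips are cosmetic: the bimeromorphy of $\alb_X$ comes from \cite[Lemma 2.13]{Zhang-Invent} rather than \cite{FZ13} (which handles the almost-homogeneous case), and the paper's dichotomy in (2) is on whether $N(G)\cap\Aut_0(X)$ is finite rather than on whether $\Aut_0(X)$ is trivial, which is what makes the Zariski-density assertion in the second alternative come out cleanly.
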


\begin{remark}
The condition of Theorem \ref{ThA} (except the requirement for the dynamical rank of $G$) is, up to replace $G$ by a finite-index subgroup, equivalent to that
the representation $G|_{H^{1,1}(X, \bC)}$ (or $G |_{\NS_{\bR}(X)}$ when $X$ is projective) is virtually solvable (see Remark \ref{rThA'}, \cite[Theorem 1.2]{Zhang-Invent}, or \cite[Theorem 2.3]{CWZ14}).
\end{remark}

\begin{proof}[{\bf Proof of Theorem \ref{ThA} (1).}] \label{pfThA}
Since $N(G)$ is of null-entropy and preserves $H^2(X,\bZ)$,
the restriction $N(G)|_{H^2(X,\bC)}$ is virtually unipotent (cf. \cite[Theorem 2.2]{CWZ14}).
Indeed, $$U:=\{g\in N(G) : g^*|_{H^2(X,\bC)} \text{ is unipotent}\}$$
is a normal subgroup of $N(G)$ with finite-index.
Note that $U|_{H^2(X,\bC)}$ is automatically nilpotent (with nilpotency class $c$ say).
If it is trivial, the proof is easy (see the argument below).
Thus we assume it is non-trivial, and hence has a non-trivial centre $Z|_{H^2(X,\bC)}$ for some normal subgroup $Z$ of $U$.

Note that $Z\unlhd G$ (since $Z$ is a characteristic subgroup of $G$) and $Z|_{H^2(X,\bC)}$ is a unipotent commutative matrix group.
Then by Proposition \ref{priv}, there is a non-trivial $G$-stable privileged vector subspace $F_1 \subseteq H^2(X,\bR)$
(defined over $\bQ$) such that $Z|_{F_1}$ is trivial.
Hence as the image of the nilpotent group $U|_{H^2(X,\bC)}$ by the natural restriction, $U|_{F_1}$ is also a nilpotent group of nilpotency class at most $c-1$.
Repeating the above process several times, we may find a non-trivial $G$-stable privileged vector subspace $F\subseteq H^2(X,\bR)$ (defined over $\bQ$)
such that $U|_F$ is trivial, and hence $N(G)|_F$ is finite.
Set $F_\bZ := F \cap H^2(X,\bZ)$. Then $F = F_\bZ \otimes_\bZ \bR$.

Replacing $G$ by a finite-index subgroup, we may assume that $N(G)|_F$ is trivial, see \cite[Lemma 2.4]{CWZ14}.
In other words, if $\pi : G\to G|_F$
denotes the natural restriction, then $N(G)\unlhd \Ker\pi$.
Thus $G|_F\isom G/\Ker\pi$ is the quotient group of $G/N(G)\isom \bZ^{\oplus n-1}$ by $\Ker\pi/N(G)$, and hence is abelian.
As in \cite[Proof of Theorem I]{DS04}, let $s$ be the maximal number of non-trivial linearly independent characters $\chi_i:G|_F\to (\bR,+)$ corresponding to nonzero common nef classes $L_i\in F$ of $G|_F$, such that
$g^*L_i = \exp\chi_i(g^*|_F)L_i$ for all $g \in G$.
By taking the composition with $\pi:G\to G|_F$, these $\chi_i$ can be thought of as characters of $G$ itself.
Define a group homomorphism as follows
$$\psi:G|_F\to (\bR^{\oplus s},+), \quad g^*|_F\mapsto (\chi_1(g),\dots,\chi_{s}(g)).$$
One can show that $\Ker\psi = N(G|_F)$ (cf. \cite[Proposition 4.2]{DS04}),
and $\Imm \psi$ is a discrete subgroup of $\bR^{\oplus s}$
(cf. \cite[Corollary 2.2]{DS04}) because our $F_{\bZ}$ is defined over $\bQ$.
Further, the lattice $\Imm \psi$ generates $\bR^{\oplus s}$. In other words, we have
$$G|_F/N(G|_F)\isom \bZ^{\oplus s}.$$
If $s = 0$, let $L_{s+1}$ be any nonzero common nef eigenvector of the commutative group $G|_F$.
If $s > 0$,
since $\psi(G)$ is a spanning lattice,
one may choose an automorphism $f^*|_F\in G|_F$ such that every coordinate of $\psi(f^*|_F)$ is negative;
let $L_{s+1} \in F$ be a nonzero common nef eigenvector of $G|_F$ such that $f^*L_{s+1}=\rho(f^*|_F)L_{s+1}$, where $\rho(f^*|_F)$ denotes the spectral radius of $f^*|_F$ (cf. \cite[Proposition 4.1]{DS04}, Proposition \ref{priv_bis}).
Note that $\rho((f^{-1})^*|_F) > 1$ by the choice of $f$, and hence $\rho(f^*|_F) > 1$, since $F_\bZ$ is defined over $\bZ$ and $G$-stable entailing $\det (f^*|_F) = \pm 1$.

As in \cite{DS04}, we claim that $L_1 \cdots  L_{s+1}\ne 0$.
Indeed, we first prove that $L_1 \cdots  L_{s}\ne 0$.
Suppose to the contrary that $L_1 \cdots  L_{s} = 0$.
Let $2\le k\le s$ be the least number such that $L_{i_1} \cdots L_{i_k} = 0$ for some $\{i_1,\dots,i_k\}\subseteq \{1,\dots,s\}$.
Then $L_{i_1} \cdots L_{i_{k-1}} \ne 0$, $L_{i_1} \cdots L_{i_{k-2}} \cdot L_{i_k} \ne 0$, and $L_{i_1} \cdots L_{i_k} = 0$,
imply that $\chi_{i_{k-1}}(g^*|_F) = \chi_{i_k}(g^*|_F)$ for any $g^*|_F\in G|_F$ by \cite[Lemma 4.4]{DS04}.
This is a contradiction since $\Imm \psi$ generates $\bR^{\oplus s}$.

We return back to the proof of the claim.
Suppose to the contrary that $L_1 \cdots L_{s+1} = 0$.
Since $L_1 \cdots L_{s}\ne 0$ and $\exp\chi_{s}(f^*|_F)<1<\rho(f^*|_F)$,
we have $L_1 \cdots L_{s-1} \cdot L_{s+1} = 0$ by \cite[Lemma 4.4]{DS04}.
Repeating this process, we would reach that $L_1 \cdot L_{s+1}=0$, hence $L_1$ and $L_{s+1}$ are parallel to each other (cf. \cite[Corollary 3.2]{DS04}).
But this is impossible by looking at the action of $f^*|_F$ on them.

We next claim that the above $s=n-1$.
Suppose to the contrary that $s<n-1$.
Extend $L_1 \cdots L_{s+1}$ to a full quasi-nef sequence
$$L_1 \cdots L_k \, \in \, \overline{L_1 \cdots L_{k-1} \cdot \Nef(X)} \, \subseteq \, H^{k,k}(X, \bR)$$
($1 \le k \le n$)
as in \cite[\S 2.7]{Zhang-Invent}, such that for any $g\in G$,
$$g^*(L_1 \cdots  L_k) = \exp\chi_1(g)  \cdots  \exp\chi_k(g) L_1 \cdots  L_k,$$
where $\chi_i:G\to (\bR,+)$ are group characters.
Then the group homomorphism
$$\varphi : G \to (\bR^{\oplus n-1}, +), \quad g \mapsto (\chi_1(g), \dots, \chi_{n-1}(g))$$
has $\Ker\varphi = N(G)$ and image discrete in $\bR^{\oplus n-1}$.

Denote by $\widetilde N$ the inverse image $\pi^{-1}\big(N(G|_F)\big)$ of $N(G|_F)$
under the natural restriction $\pi:G\to G|_F$.
Then $\widetilde N/N(G)$ is a free abelian subgroup of $G/N(G)\isom \bZ^{\oplus n-1}$ of rank $n-1-s$, because $$G/\widetilde N\isom \pi(G)/\pi(\widetilde N) = G|_F/N(G|_F)\isom \bZ^{\oplus s}.$$
On the other hand, since the null-entropy group $N(G|_F)$ of course fixes those $s+1$ common nef classes $L_1,\dots,L_{s+1}\in F$ of $G|_F$,
all the first $s+1$ coordinates of $\varphi(\widetilde N)$ are zero.
Hence one can embed $\varphi(\widetilde N)\isom \widetilde N/N(G)$ into $\bR^{\oplus n-1-(s+1)}$.
Recall that $\Imm \varphi$ is discrete in $\bR^{\oplus n-1}$.
Therefore, $\varphi(\widetilde N)$ is discrete in $\bR^{\oplus n-s-2}$, and hence of rank $\le n-s-2$,
which contradicts the rank calculation above.
So the claim holds.

Therefore, we obtain $n$ common nef classes $L_i$ of $G$, which are of course fixed by the null-entropy group $N(G)$.
Thus $N(G)$ fixes the nef class $A:=L_1+ \cdots +L_n$, 
which is also big because $L_1 \cdots L_n$ is nonzero and hence positive.
Hence $N(G)$ is virtually contained in $\Aut_0(X)$ by Corollary \ref{cor_Fu_Li}.
Then Theorem~\ref{ThA} (1) follows from Proposition \ref{LF}.
\end{proof}

\begin{proof}[{\bf Proof of Theorem \ref{ThA} (2).}]
By the assertion (1), $N(G)$ is virtually contained in $\Aut_0(X)$, i.e., $|N(G) : N(G) \cap \Aut_0(X)| < \infty$.
Since $G/N(G) \cong \bZ^{\oplus n-1}$ and by \cite[Lemma 2.4]{CWZ14}, we only have to consider the case that the group $N(G) \cap \Aut_0(X)$ is infinite.
We will prove that $X$ is a complex torus in this case.

To do so, let $H$ be the identity connected component of the Zariski closure of the latter group in $\Aut_0(X)$. Then $H$ is normalized by $G$ because $N(G)\cap\Aut_0(X)$ is normal in $G$.
Consider the quotient map $X \dashrightarrow X/H$ (and its graph) as in \cite[Lemma 4.2]{Fujiki78}, where $G$ acts on $X/H$ bi-regularly.
The maximality of the dynamical rank $r(G)$ of $G$ implies that $H$ has a Zariski-dense open orbit in $X$, see \cite[Lemma 2.14]{Zhang-Invent}.

Consider the case that the irregularity $q(X) = 0$. Then $\Aut_0(X)$ is a linear algebraic group, see \cite[Theorem 5.5]{Fujiki78} or \cite[Theorem 3.12]{Lieberman78}.
Hence $X$ is an almost homogeneous variety under the action of a linear algebraic group $H\ (\le \Aut_0(X))$.
Then $|\Aut(X) : \Aut_0(X)| < \infty$ by \cite[Theorem 1.2]{FZ13}. Hence $G \le \Aut(X)$ is of null entropy, contradicting the assumption $r(G) = n-1 \ge 1$ on the dynamical rank of $G$.

So we may assume that $q(X) > 0$. Then the Albanese map $\alb_X : X \to A := \Alb(X)$ is bimeromorphic by the maximality of $r(G)$, see \cite[Lemma 2.13]{Zhang-Invent}.
Since the action $H |_A$ of $H$ on $A$ also has a Zariski-dense open orbit in $A$, we have $H |_A = \Aut_0(X)\cong A$.
Let $B \subset A$ be the locus over which $\alb_X$ is not an isomorphism. Note that $B$ and its inverse in $X$ are both $H$-stable.
Since $H |_A = \Aut_0(X)\cong A$, we have $B = \varnothing$. Hence $\alb_X$ is an isomorphism. This proves Theorem \ref{ThA} (2).
\end{proof}

\begin{proof}[{\bf Proof of Theorem \ref{ThA} (3).}]
Let $\phi$ be one of the following natural homomorphisms:
$$G \to G/(G \cap \Aut_0(X)) \quad \text{or} \quad G \to G |_E.$$
Then $K := \Ker(\phi) \le \Aut_{c}(X)$ for any class $c$ in $H^{p,p}(X,\bR)$. Choose a class $c$ in the interior of $\CE^w_p(X)$.
Since ${|}\Aut_{c}(X) {:} \Aut_0(X){|} < \infty$, by Corollary \ref{cor_Fu_Li}, this $K$ is of null entropy, see Proposition \ref{LF}. Thus $K \le N(G)$.
Now
$$\bZ^{\oplus n-1} \cong G/N(G) \cong (G/K)/(N(G)/K) \cong \phi(G)/\phi(N(G)).$$
By Theorem \ref{ThA} (1), $\phi(N(G))$ is a finite group. So there is a finite-index subgroup $G_1$ of $G$ such that $\phi(G_1) \cong \bZ^{\oplus n-1}$, see \cite[Lemma 2.4]{CWZ14}.
This proves Theorem \ref{ThA} (3), hence the whole of Theorem \ref{ThA}.
\end{proof}

Now the finiteness of $N(G)$ in \cite[Theorem I]{DS04} when $r(G) = n-1$, has a shorter proof:

\begin{corollary}\label{CorC}
Let $X$ be a compact K\"ahler manifold of dimension $n \ge 2$ and $G$ a group of automorphisms of $X$. Assume that $G$ is commutative of maximal dynamical rank $n-1$.
Then $N(G)$ is a finite group and hence $G$ is virtually isomorphic to $\bZ^{\oplus n-1}$.
\end{corollary}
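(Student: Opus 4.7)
The plan is to deduce Corollary \ref{CorC} as a consequence of Theorem \ref{ThA} by first checking that the hypotheses of that theorem are met after passing to a finite-index subgroup, then eliminating the complex-torus alternative in its conclusion using the commutativity of $G$.

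First I would observe that since $G$ is commutative, the representation $G |_{H^2(X,\bC)}$ is certainly virtually solvable, so the second alternative of Theorem \ref{Z-TitsTh} applies. By that theorem there is a finite-index subgroup $G_1$ of $G$ such that $N(G_1)$ is normal in $G_1$ and $G_1/N(G_1) \cong \bZ^{\oplus r}$ with $r \le n-1$. Since the dynamical rank of $G$ is assumed to be the maximal value $n-1$ and is independent of the choice of finite-index subgroup, we have $r = n-1$. Replacing $G$ with $G_1$ (which does not change the finiteness of $N(G)$ up to finite index), the hypothesis of Theorem \ref{ThA} is satisfied.

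Now I would apply Theorem \ref{ThA} (2): either $N(G)$ is already finite (and we are done immediately), or $X$ is a complex torus and $G \cap \Aut_0(X) = N(G) \cap \Aut_0(X)$ is Zariski-dense in $\Aut_0(X) \cong X$. I claim the second alternative cannot occur under the commutativity hypothesis. Indeed, on a complex torus $X$ every automorphism has the form $g(x) = L x + t$ with $L$ a group automorphism of $X$ and $t \in X$, while $\Aut_0(X)$ consists of translations. If $g = (L,t) \in G$ commutes with a translation $\tau_s$ with $s \in N(G) \cap \Aut_0(X)$, a direct computation gives $L s = s$. Since this must hold for all $s$ in a Zariski-dense subset of $X$, we conclude $L = \id$, so $g$ is itself a translation, i.e., $g \in \Aut_0(X)$. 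Thus $G \subseteq \Aut_0(X)$, and as $\Aut_0(X)$ acts trivially on cohomology every element of $G$ has null entropy, forcing $G = N(G)$. This contradicts $G/N(G) \cong \bZ^{\oplus n-1}$ with $n-1 \ge 1$.

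Therefore the complex-torus case is excluded and we are left with $N(G)$ finite, so $G$ is virtually a free abelian group of rank $n-1$. The only potentially delicate step is the torus exclusion, and the main observation making it routine is that commutativity with a Zariski-dense set of translations forces the linear part of each $g$ to be the identity; the rest is an application of already-proved results from the paper.
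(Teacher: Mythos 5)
Your proposal is correct and follows essentially the same route as the paper: apply Theorem \ref{ThA} (2) and rule out the complex-torus alternative by noting that commutation with a Zariski-dense set of translations forces the linear part of each automorphism to be the identity, so that every element of $G$ would be a translation of null entropy, contradicting $G/N(G)\cong\bZ^{\oplus n-1}$ with $n-1\ge 1$. The only cosmetic differences are that the paper runs the computation for a single positive-entropy element $g_0$ rather than for all of $G$, and it does not spell out the preliminary finite-index reduction, which your version handles correctly.
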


\begin{proof}
Suppose the contrary that $N(G)$ is infinite. By Theorem \ref{ThA} (2), $X$ is a complex torus and $N(G) \cap \Aut_0(X)$ is Zariski-dense in $\Aut_0(X)$.
Take $g_0 \in G$ of positive entropy. Since $g_0$ commutes with all elements in $N(G)$, it commutes with any translation $T_b \in \Aut_0(X)$.
Write $g_0 = h \circ T_a$ for some translation $T_a$ and group automorphism $h$ for $X$ (here consider $X$ as an additive group). Then for every $x \in X$, we have
$$h(a + b + x) = (g_0 \circ T_b)(x) = (T_b \circ g_0)(x) = b + h(a + x).$$
Hence $h(b) = b$ for all $b \in X$. Thus $h = \id_X$. So $g_0 = T_a$, and $g_0$ is of null entropy, a contradiction. This proves Corollary \ref{CorC}.
\end{proof}

\begin{proof}[{\bf Proof of Proposition \ref{PropA}.}]
In Theorem \ref{Z-TitsTh}, we can take $G_1$ as the preimage in $G$ of the identity connected component of the Zariski closure of $G|_{H^{1,1}(X, \bC)}$ in $\GL(H^{1,1}(X, \bC))$,
via the natural homomorphism $G \to G|_{H^{1,1}(X, \bC)}$, see \cite[Theorem 1.2]{Zhang-Invent} and Remark \ref{rL-Kcone}.
Theorem \ref{ThA} can be applied to $G_1$ instead of $G$.
So $N(G_1)|_{H^{1,1}(X, \bR)}$ is a finite group and
$$G_1/N(G_1) \cong ({G_1}|_{H^{1,1}(X, \bR)})/(N({G_1})|_{H^{1,1}(X, \bR)}) \cong \bZ^{\oplus n-1} .$$
Further, $G_1 \unlhd G$ and $|G : G_1| < \infty$. 
For simplicity, we still use the same notation introduced in the proof of Theorem \ref{ThA}.
Applying Lemma \ref{cB-cone} to ${G_1}|_{H^{1,1}(X, \bR)}$ and the nef cone of $X$, we may assume that nef classes $L_i$ with $1\le i\le n$ are common eigenvectors for the action of $G_1$ on $H^{1,1}(X, \bR)$.
These $n$ classes $L_i$ form a basis of an $n$-dimensional real vector space that we denote by $W\subseteq H^{1,1}(X,\bR)$.

For any fixed index $1\le i\le n$, we can find $g_i \in G_1$
such that $g_i^*L_j = \lambda_{i, j} L_j$ with $\lambda_{i, j} < 1$, for any $j \ne i$.
This is true for $i = n$ because $\varphi(G_1)$ is a spanning lattice in $(\bR^{\oplus n-1}, +)$.
The same is true for other $i$ since $\sum_{j=1}^n \chi_j = 0$ or $\prod_{j=1}^n \exp \chi_j = 1$. 
Since $W$ contains a nef and big class $A:=\sum_{j=1}^n L_j$, 
we have $d_1(g_i) = \rho(g_i^*|_W)$, and hence $d_1(g_i) = \max_{j} \{\lambda_{i,j}\}$.
Since $\lambda_{i, j} < 1$ for $j \ne i$ and $\lambda_{i, i} = 1/\prod_{j \ne i} \lambda_{i, j} > 1$, we have $\lambda_{i,i}=d_1(g_i)$.
Further, we have the following uniqueness property by \cite[Lemma 4.4 and Corollary 3.2]{DS04}.

\medskip
\noindent
{\bf Uniqueness Property.}
Let $N$ be a nef class such that $g_i^*N = \lambda N$ with $\lambda >1$. Then $N$ is parallel to $L_i$.

\begin{claim} \label{claim_GG1}
For every $g\in G$, $g^*$ permutes the half-lines $\bR_+L_i$ for $i=1, \ldots, n$. In particular, $W$ is invariant by $G$.
\end{claim}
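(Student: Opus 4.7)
The plan is to exploit the normality of $G_1$ in $G$ together with the Uniqueness Property: for each $g \in G$ and each index $i$, conjugation by $g$ sends $g_i$ into $G_1$, and this conjugate must have $g^*L_i$ as its distinguished nef eigenvector with eigenvalue $>1$, forcing $g^*L_i$ to be parallel to some $L_{\sigma(i)}$. Concretely, I fix $g \in G$ and $i \in \{1,\ldots,n\}$ and form $h_i := g^{-1} g_i g$, which lies in $G_1$ since $G_1 \unlhd G$. Because $h_i \in G_1$, the construction of $G_1$ ensures that $h_i^*$ acts diagonally on $W$ in the basis $L_1, \ldots, L_n$ with positive eigenvalues $\mu_{i,1}, \ldots, \mu_{i,n}$. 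The identity $h_i^* = g^* \circ g_i^* \circ (g^*)^{-1}$ then gives $h_i^*(g^*L_i) = \lambda_{i,i}\, g^*L_i$; since $g^*$ preserves the nef cone, $g^*L_i$ is a nef eigenvector of $h_i^*$ with eigenvalue $\lambda_{i,i} > 1$.

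Next I would match this eigenvalue with one of the $\mu_{i,j}$'s. Dynamical degree is invariant under conjugation, so $d_1(h_i) = d_1(g_i) = \lambda_{i,i}$; the same reasoning used in the paper to establish $d_1(g_i) = \rho(g_i^*|_W)$ (invoking that $A = \sum_j L_j \in W$ is nef and big, so that growth on the pseudo-effective cone is controlled by growth on $A$) applies verbatim to $h_i \in G_1$ and yields $\rho(h_i^*|_W) = \lambda_{i,i}$. Hence some index $\sigma(i)$ achieves $\mu_{i,\sigma(i)} = \lambda_{i,i}$, so $L_{\sigma(i)}$ is itself a nef eigenvector of $h_i^*$ with eigenvalue $\lambda_{i,i} > 1$. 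The Uniqueness Property transports from $g_i$ to $h_i$ via the bijection $N \mapsto (g^*)^{-1}N$ on the nef cone, which intertwines $h_i^*$-eigenvectors with $g_i^*$-eigenvectors: any nef eigenvector $N$ of $h_i^*$ with eigenvalue $> 1$ yields $(g^*)^{-1}N$ nef and a $g_i^*$-eigenvector with the same eigenvalue, hence parallel to $L_i$ by the original Uniqueness Property; so $N$ is parallel to $g^*L_i$. Applying this with $N = L_{\sigma(i)}$ gives $g^*L_i = c_i L_{\sigma(i)}$ for some $c_i > 0$.

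Hence $g^*$ sends the half-line $\bR_+ L_i$ onto $\bR_+ L_{\sigma(i)}$; since $g^*$ is invertible and the $L_j$'s are linearly independent, the assignment $i \mapsto \sigma(i)$ is a permutation in $S_n$, giving the claim and the $G$-stability of $W = \bigoplus_{j} \bR L_j$. The main obstacle I anticipate is the eigenvalue-matching step, namely the equality $\rho(h_i^*|_W) = d_1(h_i)$ for the conjugate $h_i$; this is what forces the top eigenvalue $\lambda_{i,i}$ to actually appear among the $\mu_{i,j}$'s and thereby produces $\sigma(i)$. Once this is in place, the Uniqueness Property (automatically inherited by $h_i$ via conjugation by $g$) closes the argument cleanly.
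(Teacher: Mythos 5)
Your proposal is correct and follows essentially the same route as the paper: conjugate $g_i$ into $G_1$ to get $h_i=g^{-1}g_ig$, note that all $L_j$ are $h_i^*$-eigenvectors, identify $\rho(h_i^*|_W)=d_1(h_i)=d_1(g_i)=\lambda_{i,i}$ via the big and nef class $A\in W$, and transport the Uniqueness Property through $(g^*)^{-1}$ to conclude that $g^*L_i$ is parallel to some $L_{\sigma(i)}$. The eigenvalue-matching step you flag as the main obstacle is handled in the paper by exactly the argument you describe.
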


\begin{proof} \renewcommand{\qedsymbol}{}
Since $G_1$ is normal in $G$, we have $h_i:=g^{-1}g_ig\in G_1$. It follows that all $L_j$ are eigenvectors of $h_i^*$.
In particular, $W$ is $h_i^*$-stable, and if $\lambda$ is the spectral radius of $h_i^*|_{W}$, then $h_i^*L_j=\lambda L_j$ for some $j$.
Note that the eigenvalue associated with $L_j$ is always a positive number because $L_j$ is nef.
Since $W$ contains big and nef classes, we deduce that $\lambda$ is the first dynamical degree of $h_i$.
Moreover, since $h_i$ and $g_i$ are conjugated, they have the same first dynamical degree, i.e., $\lambda=d_1(g_i)$.

We deduce from the definition of $h_i$ that $(g^{-1})^*L_j$ is an eigenvector of $g_i^*$ associated with the maximal eigenvalue $\lambda=d_1(g_i)$.
By the above uniqueness property, $(g^{-1})^*L_j$ is parallel to $L_i$. It follows that $g^*L_i$ is parallel to $L_j$. This proves the claim.
\end{proof}

We go back to the proof of Proposition~\ref{PropA}. By Claim \ref{claim_GG1}, we can associate $g$ with an element $\sigma_g$ in the symmetric group $S_n$ that we identify with the group of all permutations of the half-lines $\bR_+L_i$.
So we have a natural group homomorphism from $G$ to $S_n$. Let $G_0$ denote its kernel. Recall that we use the basis $L_1,\ldots,L_n$ for the vector space $W$. We see that
$G_0$ is the set of $g\in G$ such that the action of $g^*$ on $W$ is given by an $n\times n$ diagonal matrix with non-negative entries and determinant $1$.
Indeed, the entries of this matrix are non-negative because the classes $L_j$ are nef;
the determinant is $1$ because $g^*$ is identity on $H^{n,n}(X,\bR)$ and hence preserves the class $L_1 \cdots  L_n$.
We also see that $N(G_0)$ is the set of all elements $g\in G_0$ which act as the identity on $W$, see Theorem \ref{th_Fu_Li}. Hence $N(G_0)$ is a normal subgroup of $G_0$.

Now, $G_0/N(G_0)$ can be identified with a subgroup of the group of all the diagonal $n\times n$ matrices with non-negative entries and determinant $1$.
The later one is isomorphic to the torsion-free additive group $(\bR^{\oplus n-1},+)$.
Moreover, since $G_0$ contains the finite-index subgroup $G_1$, the group $G_0/N(G_0)$ contains a finite-index subgroup isomorphic to $G_1/N(G_1)\cong\bZ^{\oplus n-1}$.
We necessarily have $G_0/N(G_0)\cong\bZ^{\oplus n-1}$. This completes the proof of Proposition \ref{PropA}.
\end{proof}

\begin{proof}[{\bf Proof of Remark \ref{rPropA}.}]
As in the proof of Proposition \ref{PropA}, let $G_1$ be the preimage in $G$ of the identity connected component of the Zariski closure of
$G|_{H^{1,1}(X, \bC)}$ in $\GL(H^{1,1}(X, \bC))$, via the natural homomorphism $G \to G|_{H^{1,1}(X, \bC)}$.
Then we have $G_1/N(G_1) \cong \bZ^{\oplus n-1}$ and $|G : G_1| < \infty$.
Further, by definition, $G_1$ is normalized by $H$. Now we follow the proof of Proposition \ref{PropA}, with $G$ there, replaced by $H$ here.
In particular, Claim \ref{claim_GG1} holds for all $g \in H$.
In the same way as for $G_0$, we construct a group $H_0$ such that $G_1 \unlhd H_0 \unlhd H$, $H/H_0$ is identified with a subgroup of $S_n$
and $H_0/N(H_0)$ is isomorphic to a subgroup of the torsion-free additive group $(\bR^{\oplus n-1}, +)$.
Hence by \cite[Lemma 5.5]{Dinh12}, ${H_0}|_{H^{1,1}(X, \bR)}$ is virtually solvable since ${N(H_0)}|_{H^{1,1}(X, \bR)}$ is virtually unipotent (cf. \cite[Theorem 2.2]{CWZ14}).
So, according to \cite[Theorem 1.2]{Zhang-Invent}, the torsion-free abelian group $H_0/N(H_0)$ is isomorphic to $\bZ^{\oplus n-1}$ since it contains $G_1/N(G_1) \cong \bZ^{\oplus n-1}$.
By Theorem \ref{ThA}, $N(H_0)$ and $N(G_1)$ are virtually contained in $\Aut_0(X)$.
Thus we have a commutative diagram:
\[\xymatrix{
H/(H \cap \Aut_0(X)) & & H_0/(H_0 \cap \Aut_0(X)) \ar@{_{(}->}[ll]_{\textrm{finite-index}} \ar@{>>}[rr]^{\textrm{finite-kernel}} & & H_0/N(H_0) \cong \bZ^{\oplus n-1} \\
G/(G \cap \Aut_0(X)) \ar@{^{(}->}[u]_{} & & G_1/(G_1 \cap \Aut_0(X)) \ar@{^{(}->}[u]_{} \ar@{_{(}->}[ll]_{\textrm{finite-index}} \ar@{>>}[rr]^{\textrm{finite-kernel}} & & G_1/N(G_1) \cong \bZ^{\oplus n-1} \ar@{^{(}->}[u]^{\textrm{finite-index}}.
}\]
It follows that $H/(H \cap \Aut_0(X))$ is a finite extension of $G/(G \cap \Aut_0(X))$ and both are virtually free abelian groups of rank $n-1$, see \cite[Lemma 2.4]{CWZ14}.
If $G_1 \cap \Aut_0(X)$ (or equivalently $N(G_1)$) is infinite, then it is Zariski-dense in $\Aut_0(X)$ and $X$ is a complex torus, see Theorem \ref{ThA}.
The same property holds if we replace $G_1$ by $H_0$. Otherwise, $|H_0 : G_1| < \infty$. The remark follows.
\end{proof}

\end{document}